\newtheorem{Theorem}{Theorem}[section]
\newtheorem{Corollary}[Theorem]{Corollary}
\newtheorem{Proposition}[Theorem]{Proposition}
{\theoremstyle{definition}
\newtheorem{Definition}[Theorem]{Definition}
\newtheorem{Notation}[Theorem]{Notation}

\newtheorem{Remark}[Theorem]{Remark}
}
\def\di{\operatorname{diag}}
\def\vr{\varrho}
\def\a{\alpha}
\def\om{\omega}
\def\vp{\varphi}
\def\ve{\varepsilon}
\def\wh{\widehat}
\def\wt{\widetilde}
\def\ov{\overline}
\def\p{\partial}
\def\prt{\partial}
\def\BC{{\mathbb C}}
\def\BR{{\mathbb R}}
\def\clp{{\mathcal P}}
\def\clr{{\mathcal R}}
\def\clm{{\mathcal M}}
\def\cln{{\mathcal N}}
\def\clu{{\mathcal U}}
\newcommand{\E}{\mathrm{e}}
\newcommand{\I}{\mathrm{i}}
\def\mf{\mathfrak}
\newcommand{\cD}{{\mathcal D}}
\newcommand{\cF}{{\mathcal F}}
\newcommand{\cM}{{\mathcal M}}
\newcommand{\cN}{{\mathcal N}}
\newcommand{\cP}{{\mathcal P}}
\newcommand{\cQ}{{\mathcal Q}}
\newcommand{\cR}{{\mathcal R}}
\newcommand{\cV}{{\mathcal V}}
\numberwithin{equation}{section}
\begin{document}

\allowdisplaybreaks

\newcommand{\arXivNumber}{1405.3500}

\renewcommand{\PaperNumber}{001}

\FirstPageHeading

\ShortArticleName{Initial Value Problems for Integrable Systems on a Semi-Strip}

\ArticleName{Initial Value Problems for Integrable Systems\\ on a Semi-Strip}

\Author{Alexander L.~{SAKHNOVICH}}

\AuthorNameForHeading{A.L.~Sakhnovich}
\Address{Vienna University of Technology, Institute of Analysis and Scientific Computing,\\
Wiedner Hauptstr. 8, A-1040 Vienna, Austria}
\Email{\href{mailto:oleksandr.sakhnovych@tuwien.ac.at}{oleksandr.sakhnovych@tuwien.ac.at}}

\ArticleDates{Received September 01, 2015, in f\/inal form December 28, 2015; Published online January 03, 2016}

\Abstract{Two important cases, where boundary conditions and solutions of the well-known integrable equations
on a semi-strip are uniquely determined by the initial conditions, are rigorously studied in detail. First, the case of
rectangular matrix solutions of the defocusing nonlinear Schr\"odinger equation
with quasi-analytic boundary conditions is dealt with. (The result is new even for a scalar nonlinear Schr\"odinger equation.)
Next, a special case of the nonlinear
optics ($N$-wave) equation is considered.}

\Keywords{Weyl--Titchmarsh function; initial condition; quasi-analytic functions; system on a semi-strip;
nonlinear Schr\"odinger equation; nonlinear optics equation}

\Classification{35Q55; 35Q60; 34B20; 35A02}

\section{Introduction} \label{Intro}

Cauchy problems for wave equations were successfully investigated
using the classical inverse scattering transform, Laplace transforms and also
some other methods.
The theory of initial-boundary value problems (and problems in a quarter-plane
or semi-strip) is somewhat more complicated even for the case of linear wave equations.
Some results, discussions and references on this topic are given
 in \cite{Ash, DeMaS, FoP, GMZ, Krei, SaAF}. The mentioned above results
and discussions are also related to the integrable nonlinear equations.
Although it is impossible to refer here to the whole variety of important publications on the
initial-boundary value problems for integrable wave equations, we would like
to list just some: \cite{BiTa, BW83, CB, DeMaS, Fok, FoP, Holm, KamS, Ka, KaNe, KaSt, Kri, Sab1, Sab3, SaA7, SaSaR, SaL91, Skl}.
Usually (excluding, e.g., sine-Gordon case \cite{KaNe, Kri, SaAg, SaSaR}),
initial-boundary value problems are overdetermined (see \cite{BoFo, Fok, SaSaR}) for such cases of integrable nonlinear equations, where
exact procedures to recover solutions from initial and boundary values exist.
Thus, the reduction of the initial-boundary conditions, which are necessary to recover solutions, becomes crucial for
solving initial-boundary value problems.

In this paper we extend and develop further the work (which was started in \cite{Sa14b}) on the reduction
of the necessary initial-boundary conditions. Namely, a case, where boundary conditions provide direct information about an initial condition,
was investigated in \cite{Sa14b}.
Here we study the cases, where an initial condition provides direct information about boundary conditions,
and the solutions of the well-known nonlinear integrable equations in a semi-strip are uniquely def\/ined by the
initial conditions. (One could speak, perhaps, about additional symmetries of the solutions.)

We consider such situations using the inverse spectral transform approach \cite{Ber, BerG, Kv}.
More precisely, we follow the scheme introduced in \cite{Sa88, SaLev2}, see also \cite[Chapter~12]{Sa99}
and references therein. That is, we describe the evolution of Weyl--Titchmarsh (Weyl) function in terms of the linear-fractional transformations.
The scheme is applicable to various integrable equations
and several interesting uniqueness and existence theorems were proved in this way (see \cite[Chapter~6]{SaSaR}
and references therein for more details). Most of the mentioned above uniqueness and existence theorems
were obtained for the equations with scalar solutions. Here we consider
the matrix defocusing nonlinear Schr\"odinger (defocusing NLS or dNLS) equation
\begin{gather} \label{1.1}
2v_t=\I(v_{xx}-2vv^*v), \qquad v_t:=\frac{\p}{\p t}v ,
\end{gather}
which is equivalent \cite{ZS1, ZS2} to the compatibility condition
\begin{gather} \label{zc}
G_t-F_x+[G,F]=0 \qquad [G,F]:=GF-FG
\end{gather}
of the auxiliary linear systems
\begin{gather} \label{au}
y_x=Gy, \qquad y_t=Fy,
\end{gather}
where
\begin{gather} \label{1.2}
G=\I (zj+jV), \qquad
F=-\I\big(z^2 j+ z jV-\big(\I V_x-jV^2\big)/2\big), \\
\label{1.3}
j = \left[
\begin{matrix}
I_{m_1} & 0 \\ 0 & -I_{m_2}
\end{matrix}
\right], \qquad V= \left[\begin{matrix}
0&v\\v^{*}&0\end{matrix}\right],
\end{gather}
$I_{m_1}$ is the $m_1\times m_1$ identity matrix and $v$ is an $m_1\times m_2$ matrix function.
We will consider dNLS equation on the semi-strip
\begin{gather}  \label{1.4}
{\mathcal D}=\{(x,\, t)\colon 0 \leq x <\infty, \, 0\leq t<a\},
\end{gather}
and we note that the auxiliary system
\begin{gather} \label{1.5}
y_x=Gy=\I(zj+jV)y
\end{gather}
is (for each f\/ixed~$t$) a well-known self-adjoint Dirac system, also called AKNS or Zakharov--Shabat system.
Without changes in notation, we speak about usual derivatives inside domains
and about left or right
(which should be clear from the context) derivatives
on the boundaries, and boundaries of~$\cD$ in particular.
We should mention that, in the usual PDE setting, solutions are often considered in open domains
but, in view of certain regularity of the solutions treated in our paper, the boundaries are included
in~${\mathcal D}$ in our case.

Since $v$ in dNLS \eqref{1.1} is an $m_1 \times m_2$ matrix function,
interesting matrix, vector and multicomponent dNLS equations from~\cite[Chapter~4]{Abl0}
are included in the considered class.

Another equation that we study in this paper is the nonlinear optics (or $N$-wave) equation:
\begin{gather} \label{NW1}
\left[D, \frac{\prt \vr}{\prt t} \right] - \left[\wh D, \frac{\prt \vr}{\prt x} \right]
=
[D, \vr] [\wh D, \vr] - [\wh D, \vr] [ D, \vr], \qquad \vr=\vr^*, \\
\label{NW2} D=
\di \{ d_1,  d_2, \ldots,
 d_m \}, \qquad d_1>d_2>\cdots>d_m>0, \qquad [D,\vr]:=D\vr-\vr D,
\end{gather}
where $\vr(x,t)$ is an $m\times m$ matrix function, $\di \{ d_1, d_2, \ldots \}$ stands for a diagonal matrix with the
entries $d_1, d_2, \dots$ on the main diagonal, and $\wh D>0$ is another diagonal matrix.

First, we obtain the evolution of the Weyl function for the equation~\eqref{NW1}.
 Next, we consider an interesting special case, where (similar to the inequalities for the entries of~$D$)
we have
\begin{gather} \label{NW3}
\wh D=\di \big\{ \wh d_1, \wh d_2, \ldots,
\wh d_m \big\}, \qquad \wh d_1> \wh d_2>\cdots> \wh d_m>0.
\end{gather}

In Section \ref{Prel} we formulate some necessary results on Weyl functions (and their evolution for the dNLS case), in Section~\ref{Quasi}
we recover the boundary conditions for dNLS from an initial condition on a semi-axis
(and in this way we solve the initial value problem for dNLS in a~semi-strip), and Section~\ref{NW}
is dedicated to the evolution of the Weyl function and initial value problem in a semi-strip for the $N$-wave equation.

We note that the theory of Weyl functions (Weyl--Titchmarsh theory) is actively developing in recent years (see, e.g., \cite{ClGe, DaKiS, FKS, GeZi,GKS6, KoSaTe,SaSaR, Sa99, Si0, Te}
and references therein) and its applications
to initial-boundary value problems are of growing interest.

As usual, $\BR$ stands for the real axis,
$\BR_+=(0, \infty)$,
$\BC$ stands for the complex plane, and
$\BC_+$ for the open semi-plane $\{z\colon \Im(z)>0\}$.
We say that~$v(x)$ is locally summable if its entries are summable on all f\/inite intervals of $[0, \infty)$.
We say that~$v$ is continuously dif\/ferentiable
if~$v$ is dif\/ferentiable and its f\/irst derivatives are continuous.
The notation~$\| \cdot \|$ stands for the~$l^2$ vector norm or the induced matrix norm.
The partial derivative~$f_{xt}$ stands for $\p f_x / \p t$ and, correspondingly, $f_{tx} = \p f_t / \p x$.

\section{Preliminaries} \label{Prel}

\subsection{Dirac system and dNLS}

We denote by $u$ the fundamental solution of system~\eqref{1.5} normalized by
the condition
\begin{gather*}  
u(0,z)=I_m, \qquad m=m_1+m_2.
\end{gather*}
\begin{Definition} \label{CyW2} Let Dirac system~\eqref{1.5} on $[0, \infty)$
 be given and assume that $V$ is locally summable.
Then the Weyl function $\vp$ is an $m_2 \times m_1$ holomorphic matrix function, which satisf\/ies the inequality
\begin{gather}  \label{2.0}
\int_0^{\infty}
\begin{bmatrix}
I_{m_1} & \vp(z)^*
\end{bmatrix}
u(x,z)^*u(x,z)
\begin{bmatrix}
I_{m_1} \\ \vp(z)
\end{bmatrix}dx< \infty .
\end{gather}
\end{Definition}

The following proposition is proved in \cite{FKRSp1} (see also \cite[Section~2.2]{SaSaR}).
\begin{Proposition} The Weyl function always exists and it is unique.
\end{Proposition}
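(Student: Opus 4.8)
The plan is to establish existence and uniqueness of the Weyl function $\vp$ via the classical nested-disk (Weyl-disk) construction adapted to the Dirac system \eqref{1.5}. First I would fix $z\in\BC_+$ and, for each finite $\ell>0$, consider the set of $m_2\times m_1$ matrices $P$ such that
\begin{gather*}
\int_0^{\ell}
\begin{bmatrix} I_{m_1} & P^* \end{bmatrix}
u(x,z)^* u(x,z)
\begin{bmatrix} I_{m_1} \\ P \end{bmatrix} dx \leq R
\end{gather*}
for a suitable bound $R$; equivalently, parametrize candidate solutions by boundary data at $x=\ell$ of the form $u(\ell,z)\begin{bmatrix} I_{m_1}\\ P\end{bmatrix}$ and track which $P$ keep the truncated integral bounded. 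Using the identity $\frac{d}{dx}\big(u(x,z)^* j\, u(x,z)\big) = \I(z-\ov z)\, u(x,z)^* u(x,z)$, which follows from $G=\I(zj+jV)$, $j^*=j$, $V^*=V$ and $jV=-(jV)^*$ after a short computation, one sees that $u(x,\ov z)^* j\, u(x,z)$-type quadratic forms are monotone in $x$ on $\BC_+$. This monotonicity is exactly what makes the family of admissible $P$'s, viewed via a linear-fractional (Möbius-type) transformation of matrix balls, a nested decreasing family of compact convex sets as $\ell\to\infty$.

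Next I would show the intersection $\bigcap_{\ell>0}$ of these nested matrix balls is nonempty (by compactness, using finite intersection property) — this yields existence of at least one $\vp(z)$ satisfying \eqref{2.0} for that fixed $z$ — and then argue the intersection is a single point, which gives uniqueness at that $z$. Uniqueness is where the self-adjointness (the presence of the signature matrix $j$ and the reduction $V=V^*$) is essential: if two distinct matrices $\vp_1,\vp_2$ both satisfied \eqref{2.0}, then the difference of the corresponding solutions would be square-integrable, and pairing against $j$ and integrating by parts via the monotonicity identity forces the off-diagonal coupling to vanish, contradicting $\Im z\neq 0$. Concretely, square-integrability of both $u(x,z)\begin{bmatrix}I_{m_1}\\ \vp_k(z)\end{bmatrix}$ together with $\int_0^\infty \frac{d}{dx}\big(\cdots j\cdots\big)\,dx$ being simultaneously a nonzero multiple of a positive quantity and a vanishing boundary term is the contradiction. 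Finally, holomorphy of $z\mapsto\vp(z)$ on $\BC_+$ follows because the radii of the nested balls shrink locally uniformly in $z$ (by the same monotonicity estimates, which are locally uniform in $z$ since $u(x,z)$ is entire in $z$), so $\vp$ is a locally uniform limit of holomorphic truncated approximants.

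The main obstacle I anticipate is the rectangular/matrix bookkeeping: for $m_1\neq m_2$ the "Weyl disks" are not disks but matrix balls of the form $\{C_\ell + R_\ell^{1/2} W S_\ell^{1/2}: \|W\|\le 1\}$, and one must verify that the linear-fractional transformation induced by $u(\ell,z)$ indeed maps the unit ball of $m_2\times m_1$ contractions into a shrinking matrix ball — this requires checking that the relevant $2\times2$ block Hamiltonian built from $j$ is $J$-contractive (or $J$-expansive) along the flow, i.e. a propagation-of-$j$-signature argument. Once that structural fact is in place, the nesting, nonemptiness, shrinkage-to-a-point, and holomorphy all follow by the standard arguments; since the excerpt attributes the full statement to \cite{FKRSp1} and \cite[Section~2.2]{SaSaR}, I would cite those for the detailed matrix-ball estimates rather than reproduce them, and simply indicate the monotonicity identity as the analytic engine driving all four claims (existence, uniqueness, boundedness in \eqref{2.0}, and holomorphy).
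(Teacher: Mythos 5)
Your proposal is correct and follows essentially the same route as the paper and its cited source \cite{FKRSp1}: the nested matrix-ball (Weyl-disk) construction driven by the monotonicity identity $\frac{d}{dx}\big(u^*ju\big)=\I(z-\ov z)u^*u$, which is exactly the machinery the paper records in Definition~\ref{set}, Remark~\ref{MB} and Proposition~\ref{PnW1}. The paper itself does not reproduce the argument but defers to \cite{FKRSp1}, so your sketch (including the $j$-form contradiction for uniqueness and the shrinking left semi-radii for the point intersection) matches the intended proof.
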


In order to construct the Weyl function, we introduce a class of $m \times m_1$ matrix functions
$\clp(z)$, which are an immediate analog of the classical pairs
of parameter matrix functions. Namely, the matrix functions
$\clp(z)$ are meromorphic in $\BC_+$ and satisfy
(excluding, possibly, a discrete set of points)
the following relations
\begin{gather}\label{2.1}
\clp(z)^*\clp(z) >0, \qquad \clp(z)^* j \clp(z) \geq 0, \qquad z\in \BC_+.
\end{gather}
It is said that $\clp(z)$ are nonsingular (i.e., the f\/irst inequality in~\eqref{2.1} holds)
and with property-$j$ (i.e., the second inequality in~\eqref{2.1} is valid).
Relations~\eqref{2.1} imply (see, e.g., \cite{FKRSp1}) that
\begin{gather*}  
\det \big(\begin{bmatrix}
I_{m_1} & 0
\end{bmatrix}u(x,z)^{-1}\clp(z)\big)\not= 0.
\end{gather*}
\begin{Definition} \label{set}
The set $\cln(x,z)$ of M\"obius transformations is the set of values $($at the f\/ixed $x \in [0, \infty)$, $z\in \BC_+)$
of matrix functions
\begin{gather*}
\vp(x,z,\clp)=\begin{bmatrix}
0 &I_{m_2}
\end{bmatrix}u(x,z)^{-1}\clp(z)\big(\begin{bmatrix}
I_{m_1} & 0
\end{bmatrix}u(x,z)^{-1}\clp(z)\big)^{-1},
\end{gather*}
where $\clp(z)$ are nonsingular matrix functions
 with property-$j$.
 \end{Definition}

 \begin{Remark}\label{MB}
 It was shown in \cite{FKRSp1} that a family $\cln(x,z)$, where $x$ increases to inf\/inity and $z$ is f\/ixed
 $(z \in \BC_+)$,
is a family of embedded matrix balls such that the right semi-radii
 are uniformly bounded and the left semi-radii tend to zero. $($Recall that the $m_2\times m_1$ matrix ball, or Weyl matrix ball, with the center~$\clm$, the left semi-radius~$\clr_l$
 and the right semi-radius $\clr_r$ is the set of
 $m_2\times m_1$ matrices~$\om$
 which may may be presented
 in the form $\om = \clm + \clr_l \clu \clr_r$, where~$\clu$ are contractive $m_2\times m_1$ matrices.$)$
 \end{Remark}

 \begin{Proposition}[\cite{FKRSp1}] \label{PnW1} Let Dirac system \eqref{1.5} on $[0, \infty)$
 be given and assume that $V$ is locally summable.
 Then, the sets $\cln(x,z)$
 are well-defined. There is a unique matrix function
 $\vp(z)$ defined in $\BC_+$ and such that
\begin{gather}  \label{2.3}
\{\vp(z) \}=\bigcap_{x<\infty}\cln(x,z).
\end{gather}
This function is analytic and non-expansive $($i.e., contractive$)$.
Furthermore, this function coincides with the Weyl function of system~\eqref{1.5}.
 \end{Proposition}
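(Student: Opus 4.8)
\emph{Plan of proof.} The idea is to reduce everything to a single differential identity for $u(x,z)^*j\,u(x,z)$ together with the matrix-ball picture recalled in Remark~\ref{MB}. Using $u_x=\I(zj+jV)u$ together with $j^*=j$, $V^*=V$ and the anticommutation relation $jV+Vj=0$ (immediate from~\eqref{1.3}), I would first establish
\begin{gather*}
\frac{\p}{\p x}\big(u(x,z)^*j\,u(x,z)\big)=-2\Im(z)\,u(x,z)^*u(x,z),
\end{gather*}
and hence, integrating from $x=0$ and using the normalization $u(0,z)=I_m$,
\begin{gather*}
u(x,z)^*j\,u(x,z)=j-2\Im(z)\int_0^x u(t,z)^*u(t,z)\,dt .
\end{gather*}
That the sets $\cln(x,z)$ are well defined is precisely the non-vanishing of $\det\big(\begin{bmatrix}I_{m_1}&0\end{bmatrix}u(x,z)^{-1}\clp(z)\big)$ recorded before Definition~\ref{set}, which follows from~\eqref{2.1}.

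Next I would extract the two consequences that carry the argument. Fix $z\in\BC_+$ and a nonsingular $\clp$ with property-$j$, set $\vp=\vp(x,z,\clp)$ and $R=\big(\begin{bmatrix}I_{m_1}&0\end{bmatrix}u(x,z)^{-1}\clp(z)\big)^{-1}$, so that $u(x,z)\begin{bmatrix}I_{m_1}\\ \vp\end{bmatrix}=\clp(z)R$ and $\begin{bmatrix}I_{m_1}&\vp^*\end{bmatrix}j\begin{bmatrix}I_{m_1}\\ \vp\end{bmatrix}=I_{m_1}-\vp^*\vp$. Multiplying the integrated identity on the left by $\begin{bmatrix}I_{m_1}&\vp^*\end{bmatrix}$ and on the right by $\begin{bmatrix}I_{m_1}\\ \vp\end{bmatrix}$, and using $\begin{bmatrix}I_{m_1}&\vp^*\end{bmatrix}u(x,z)^*j\,u(x,z)\begin{bmatrix}I_{m_1}\\ \vp\end{bmatrix}=R^*\clp(z)^*j\,\clp(z)R$, I get
\begin{gather*}
I_{m_1}-\vp^*\vp=R^*\clp(z)^*j\,\clp(z)R+2\Im(z)\int_0^x\begin{bmatrix}I_{m_1}&\vp^*\end{bmatrix}u(t,z)^*u(t,z)\begin{bmatrix}I_{m_1}\\ \vp\end{bmatrix}dt .
\end{gather*}
Since $\clp(z)^*j\,\clp(z)\ge0$ by~\eqref{2.1} and $\Im(z)>0$, it follows at once that every element of $\cln(x,z)$ is contractive and that
\begin{gather*}
\int_0^x\begin{bmatrix}I_{m_1}&\vp^*\end{bmatrix}u(t,z)^*u(t,z)\begin{bmatrix}I_{m_1}\\ \vp\end{bmatrix}dt\le\frac{1}{2\Im(z)}\big(I_{m_1}-\vp^*\vp\big)\le\frac{1}{2\Im(z)}I_{m_1}.
\end{gather*}

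Now I would invoke Remark~\ref{MB}: the $\cln(x,z)$ are nested, their right semi-radii stay bounded and their left semi-radii tend to $0$ as $x\to\infty$; since each ball is compact and its diameter is at most twice the product of the two semi-radii, the diameters shrink to $0$ and $\bigcap_{x<\infty}\cln(x,z)$ is a single matrix, which I denote $\vp(z)$ (uniqueness of this point also follows a posteriori from the uniqueness of the Weyl function). For every $x$ there is an admissible $\clp$ with $\vp(z)=\vp(x,z,\clp)$, so the last displayed inequality holds with $\vp$ replaced by $\vp(z)$ for all $x$; letting $x\to\infty$ gives $\int_0^\infty\begin{bmatrix}I_{m_1}&\vp(z)^*\end{bmatrix}u(x,z)^*u(x,z)\begin{bmatrix}I_{m_1}\\ \vp(z)\end{bmatrix}dx<\infty$, i.e.~\eqref{2.0}, together with $\vp(z)^*\vp(z)\le I_{m_1}$. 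For analyticity I would use the admissible constant parameter $\clp_0=\begin{bmatrix}I_{m_1}\\ 0\end{bmatrix}$: the map $z\mapsto\vp(x,z,\clp_0)$ is meromorphic in $\BC_+$ and, where defined, lies in $\cln(x,z)$; since the ball diameters tend to $0$ uniformly on compact subsets of $\BC_+$, these functions converge to $\vp(z)$ uniformly on compacta, and the Weierstrass theorem (applied entrywise) shows that $\vp(z)$ is holomorphic in $\BC_+$. Thus $\vp(z)$ is a holomorphic, contractive $m_2\times m_1$ matrix function satisfying~\eqref{2.0}, hence a Weyl function of system~\eqref{1.5}, and by the preceding uniqueness proposition it coincides with the Weyl function.

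The step I expect to be the genuine obstacle is the matrix-ball input taken from Remark~\ref{MB} and \cite{FKRSp1}: one needs not merely that the sets $\cln(x,z)$ are nested, but the quantitative fact that their left semi-radii collapse to $0$ and do so uniformly on compact subsets of $\BC_+$, since both the single-point conclusion and the uniform-convergence argument for analyticity rest on it. A minor additional point requiring care is the discrete set of $z$ at which $\clp_0$ or the relevant inverse fails to be defined; there one runs the convergence argument on the complement and removes the resulting singularities, using that $\vp(z)$ is locally bounded.
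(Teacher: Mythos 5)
The paper offers no proof of this proposition; it is quoted from \cite{FKRSp1}, so the only fair comparison is with the standard argument there, which your proposal reproduces faithfully: the identity $\frac{\p}{\p x}\big(u^*ju\big)=-2\Im(z)\,u^*u$, the resulting contractivity and integral bound for elements of $\cln(x,z)$, and the collapse of the nested matrix balls to the single point $\vp(z)$ satisfying \eqref{2.0}. Your argument is correct, and you rightly isolate the one ingredient you do not establish --- that the left semi-radii tend to zero, locally uniformly in $z$ --- which is precisely the content of Remark~\ref{MB} and is likewise imported from \cite{FKRSp1} by the paper itself.
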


Formula \eqref{2.3} is supplemented by the asymptotic relation
\begin{gather}  \label{2.3!}
\vp(z)=\lim_{b \to \infty}\vp_b(z),
\end{gather}
which is valid for any set of functions $\vp_b(z)\in \cln(b,z)$. Relation~\eqref{2.3!} follows from~\eqref{2.3}
and Remark~\ref{MB} (see also \cite[Remark~2.24]{SaSaR}).

Next, we consider the famous compatibility condition (zero curvature equation)~\eqref{zc}.
\begin{Proposition}[\cite{SaAF}] \label{TmM}
Let some $m \times m$ matrix functions $G$ and $F$ and their derivatives~$G_t$ and~$F_x$ exist on the semi-strip~$\cD$,
let $G$, $G_t$ and~$F$ be continuous with respect to~$x$ and~$t$ on~$\mathcal{D}$,
 and let~\eqref{zc}
hold. Then, we have the equality
\begin{gather} \label{2.4}
u(x,t,z)R(t,z)=R(x,t,z)u(x,0,z), \qquad R(t,z):=R(0,t,z),
\end{gather}
where $u(x,t,z)$ and $R(x,t,z)$ are normalized fundamental solutions given, respectively, by
\begin{gather} \label{2.5}
u_x=Gu, \qquad u(0,t,z)=I_m; \qquad R_t=FR, \qquad R(x,0,z)=I_m.
\end{gather}
The equality \eqref{2.4} means that the matrix function
\begin{gather*}
y(x,t,z)=u(x,t,z)R(t,z)=R(x,t,z)u(x,0,z)
\end{gather*}
satisf\/ies on $\cD$ both systems \eqref{au}. Moreover, the fundamental solution~$u$ admits the factorization
\begin{gather} \label{2.6}
u(x,t,z)=R(x,t,z)u(x,0,z)R(t,z)^{-1}.
\end{gather}
\end{Proposition}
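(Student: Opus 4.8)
The plan is to exploit the two defining systems in~\eqref{2.5} separately and then combine them through the zero curvature equation~\eqref{zc}, rather than attempting to integrate~\eqref{au} directly in two variables. First I would fix $z$ and, for each fixed $t$, regard $u(x,t,z)$ as the fundamental solution of the $x$-system $u_x=Gu$ with $u(0,t,z)=I_m$; likewise I would regard $R(x,t,z)$ as the fundamental solution of the $t$-system $R_t=FR$ with $R(x,0,z)=I_m$. Since $G$, $G_t$ and $F$ are assumed continuous in $(x,t)$ on $\cD$, these fundamental solutions exist and are (absolutely) continuous, and one has joint regularity enough to differentiate the products below. The key device is to introduce the candidate $y(x,t,z):=R(x,t,z)u(x,0,z)$ and show that it satisfies $y_x=Gy$; since it already obviously satisfies $y_t=Fy$ (because $u(x,0,z)$ is $t$-independent and $R_t=FR$), this will pin down $y$ up to the right normalization and give~\eqref{2.4}.

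The central computation is to check that $w:=u R$, with $R(t,z):=R(0,t,z)$, solves the same $x$-equation as $R(x,t,z)u(x,0,z)$. Compute $w_x=u_xR=GuR=Gw$, so $w$ solves $w_x=Gw$. Now I would compute the $x$-derivative of $R(x,t,z)u(x,0,z)$ and show it also equals $G$ times itself. For this one needs the mixed-derivative identity: differentiating $R_t=FR$ in $x$ and using~\eqref{zc} to replace $F_x$ by $G_t+[G,F]$, one obtains that $R_x$ satisfies a linear ODE in $t$ whose solution, together with $R_x(0,t,z)=0$ and $u(x,0,z)$ solving $u_x(x,0,z)=G(x,0,z)u(x,0,z)$, forces $\big(R(x,t,z)u(x,0,z)\big)_x = G(x,t,z)\,R(x,t,z)u(x,0,z)$. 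Concretely, set $P(x,t,z):=R_x R^{-1}-G$; a direct differentiation in $t$ using $R_t=FR$ and~\eqref{zc} yields $P_t=[F,P]$, and since $P(0,t,z)=R_x(0,t,z)R(0,t,z)^{-1}-G(0,t,z)$ can be arranged to vanish at $t=0$ and the homogeneous equation $P_t=[F,P]$ has only the zero solution with zero initial data, we get $P\equiv 0$, i.e.\ $R_x=GR$. From $R_x=GR$ it is immediate that $R(x,t,z)u(x,0,z)$ solves the $x$-equation with value $u(x,0,z)$ fixing the ``initial'' data at $t$; comparing with $w=uR$ at $x=0$ (both equal $R(t,z)$) and using uniqueness of solutions of the linear $x$-system gives~\eqref{2.4}.

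Once~\eqref{2.4} is established, the remaining assertions are formal. That $y(x,t,z)=u(x,t,z)R(t,z)=R(x,t,z)u(x,0,z)$ satisfies both systems in~\eqref{au} follows because $y_x=u_xR=Gy$ from the first representation and $y_t=R_x u(x,0,z)\big|_{\text{via }R_t}$—more precisely, differentiating the second representation in $t$ and using $R_t=FR$ (now in the $x$-shifted form $R_x=GR$ already proven is not needed here; one uses the original $t$-equation applied to $R(x,t,z)$) gives $y_t=Fy$. Finally, the factorization~\eqref{2.6} is just a rearrangement: from~\eqref{2.4}, $u(x,t,z)=R(x,t,z)u(x,0,z)R(t,z)^{-1}$, valid wherever $R(t,z)$ is invertible, which holds since $R(t,z)$ is a fundamental solution of a linear system and hence everywhere invertible.

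The main obstacle is the rigor of the mixed-derivative step: one must justify that $R_{xt}=R_{tx}$ (equality of mixed partials of the fundamental solution $R(x,t,z)$) under only the stated hypotheses—$G,G_t,F$ continuous on $\cD$, $F_x$ merely existing—and that the auxiliary quantity $P$ is differentiable in $t$ with the claimed ODE. This is where the continuity of $G_t$ (not just $G$) is used, via~\eqref{zc} to express $F_x=G_t+[G,F]$ as a continuous function, which in turn gives $R_x$ the $t$-regularity needed to run the uniqueness argument for $P$. I would handle this by a standard Picard/Gronwall estimate showing $R(x,t,z)$ and $R_x(x,t,z)$ are jointly continuous and that $t\mapsto R_x(x,t,z)$ is continuously differentiable with derivative computed by differentiating under the integral sign in the Volterra form of $R_t=FR$.
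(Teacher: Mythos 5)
The paper does not prove this proposition---it is quoted from \cite{SaAF}---so your argument can only be measured against the standard proof of the factorization formula. Your overall strategy (show that $y:=R(x,t,z)u(x,0,z)$ satisfies the $x$-system, note that $w:=u(x,t,z)R(t,z)$ trivially does, match the two at $x=0$ and invoke uniqueness for linear ODEs) is the right one, but the central computation contains a genuine error. You claim that $P:=R_xR^{-1}-G$ satisfies $P_t=[F,P]$ with vanishing initial data and conclude $R_x=GR$. The ODE $P_t=[F,P]$ is correct, but the initial condition is not: since $R(x,0,z)\equiv I_m$ forces $R_x(x,0,z)\equiv 0$, one has $P(x,0,z)=-G(x,0,z)\neq 0$ in general, so $P$ does \emph{not} vanish identically and $R_x=GR$ is false. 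A clean counterexample: take $F\equiv 0$ and $G=G(x)$ nontrivial (then \eqref{zc} holds, $R\equiv I_m$, $R_x\equiv 0$, yet $GR=G\neq 0$). Moreover, even if $R_x=GR$ were true it would not yield what you need, because $\bigl(Ru(x,0,z)\bigr)_x=R_xu(x,0,z)+R\,G(x,0,z)u(x,0,z)$ carries the extra term $R\,G(x,0,z)u(x,0,z)$ that your argument silently drops.

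The repair is to run your own $t$-ODE argument on the correct auxiliary quantity $Q:=R_x-G(x,t,z)R+R\,G(x,0,z)$. A direct computation using $R_{xt}=(FR)_x=F_xR+FR_x$ and $F_x=G_t+[G,F]$ gives $Q_t=FQ$ (the terms involving $G_t$ and $GFR$ cancel), and now the initial condition genuinely works: $Q(x,0,z)=0-G(x,0,z)+G(x,0,z)=0$, hence $Q\equiv 0$ and $R_x=GR-RG(\cdot,0,\cdot)$. With this identity the unwanted term cancels exactly, $\bigl(Ru(x,0,z)\bigr)_x=G\,Ru(x,0,z)$, and the remainder of your argument (matching at $x=0$, the $t$-equation for $y$, and invertibility of the fundamental solution $R(t,z)$ giving \eqref{2.6}) goes through as you wrote it. Your closing remarks on regularity are on target: $F_x=G_t+[G,F]$ is continuous under the hypotheses, which is what makes the interchange $R_{xt}=R_{tx}$ and the Volterra/Gronwall justification feasible, and this is precisely the delicate point treated in \cite{SaAF}.
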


Proposition \ref{TmM} and formula \eqref{2.3!} yield~\cite{Sa14b} the following evolution theorem.
 \begin{Theorem}\label{evol} Let an $m_1 \times m_2$ matrix function $v(x,t)$
 be continuously differentiable on $\cD$
 and let $v_{xx}$ exist.
 Assume that $v$ satisfies the dNLS equation \eqref{1.1} as well as the following inequalities
 $($for all $0 \leq t<a$ and some values $M(t)\in \BR_+)$:
 \begin{gather} \label{2.7}
 \sup_{x \in \BR_+, \, 0\leq s \leq t}\|v(x,s)\| \leq M(t).
\end{gather}
 Then, the evolution $\vp(t,z)$ of the Weyl functions of Dirac systems \eqref{1.5} is given $($for $\Im(z)>0)$ by the equality
 \begin{gather*} 
\vp(t,z)=\big(R_{21}(t,z)+R_{22}(t,z)\vp(0,z)\big)
\big(R_{11}(t,z)+R_{12}(t,z)\vp(0,z)\big)^{-1}.
\end{gather*}
\end{Theorem}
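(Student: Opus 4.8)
The plan is to combine the factorization of the fundamental solution from Proposition~\ref{TmM} with the characterization of the Weyl function as the intersection of the Weyl balls in Proposition~\ref{PnW1}, together with the limiting relation~\eqref{2.3!}. First I would recall that, by Proposition~\ref{TmM}, the hypotheses on $v$ (continuous differentiability on $\cD$, existence of $v_{xx}$, the dNLS equation~\eqref{1.1}, which is equivalent to the zero-curvature equation~\eqref{zc} with $G$, $F$ as in~\eqref{1.2}--\eqref{1.3}) guarantee the factorization~\eqref{2.6}, namely $u(x,t,z)=R(x,t,z)u(x,0,z)R(t,z)^{-1}$, where $R(t,z)=R(0,t,z)$ and $R$ solves $R_t=FR$, $R(x,0,z)=I_m$. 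The key observation is that at $x=0$ the matrix $G$ plays no role, so $R(t,z)$ depends only on $V(0,t)$ and its $x$-derivatives at $x=0$; in particular $R(t,z)$ is an entire function of $z$ and, being the $t$-transition matrix of the system $R_t=FR$ with $F$ as in~\eqref{1.2}, it satisfies the $j$-property type symmetries that make the linear-fractional (M\"obius) transformation in the statement well-defined on $\BC_+$.

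Next I would fix $t\in[0,a)$ and $z\in\BC_+$ and describe the Weyl function $\vp(t,z)$ of the Dirac system~\eqref{1.5} with potential $V(\cdot,t)$ via~\eqref{2.3!}: choose any sequence $\vp_b(z)\in\cln(b,z)$, i.e.\ of the form $\vp(b,t,z,\clp)$ as in Definition~\ref{set} with some nonsingular $\clp(z)$ having property-$j$; then $\vp(t,z)=\lim_{b\to\infty}\vp_b(z)$. Writing the M\"obius transformation in Definition~\ref{set} in terms of the $2\times 2$ block decomposition of $u(b,t,z)^{-1}$ and substituting the factorization~\eqref{2.6}, $u(b,t,z)^{-1}=R(t,z)u(b,0,z)^{-1}R(b,t,z)^{-1}$, I would absorb the factor $R(b,t,z)^{-1}\clp(z)$ into a new parameter matrix $\wt\clp(z):=R(b,t,z)^{-1}\clp(z)$. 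The point is that $R(b,t,z)^{-1}$ inherits the property-$j$ and nonsingularity-preserving structure (it is again the transition matrix of an $F$-type system, now between $x$-levels $b$ and $0$ but at the same $t$), so $\wt\clp(z)$ is again a legitimate parameter matrix for the Dirac system at $t=0$. Hence $\vp_b(z)$ equals the image under the fixed M\"obius transformation determined by $R(t,z)$ of a point $\wt\vp_b(z)\in\cln(b,0,z)$, the Weyl ball of the Dirac system at $t=0$.

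Finally, letting $b\to\infty$ and using~\eqref{2.3!} once more — now for the $t=0$ system, so that $\wt\vp_b(z)\to\vp(0,z)$ — together with the continuity of the M\"obius transformation $w\mapsto(R_{21}+R_{22}w)(R_{11}+R_{12}w)^{-1}$ at $w=\vp(0,z)$ (the denominator $R_{11}(t,z)+R_{12}(t,z)\vp(0,z)$ is invertible precisely because $\big[\begin{smallmatrix}I_{m_1}\\ \vp(0,z)\end{smallmatrix}\big]$ spans a maximal $j$-nonpositive... more precisely because this combination is exactly of the admissible form covered by the non-degeneracy remark following~\eqref{2.1}), I obtain $\vp(t,z)=(R_{21}(t,z)+R_{22}(t,z)\vp(0,z))(R_{11}(t,z)+R_{12}(t,z)\vp(0,z))^{-1}$, which is the asserted evolution formula. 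I expect the main obstacle to be the bookkeeping that shows $R(b,t,z)^{-1}\clp(z)$ is again an admissible parameter matrix — i.e.\ verifying that $R(b,t,z)^{-1}$ maps nonsingular property-$j$ pairs to nonsingular property-$j$ pairs and, correspondingly, that the denominator never degenerates in the limit; this is where the inequalities~\eqref{2.7}, ensuring the requisite uniform bounds (so that the relevant matrix balls behave as in Remark~\ref{MB} and the limit interchange is justified) and hence the applicability of Proposition~\ref{TmM}'s hypotheses at every $t$, enter in an essential way. The remaining steps are routine manipulations of block matrices and of the definition of the Weyl function.
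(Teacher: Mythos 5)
Your skeleton is exactly the one the paper invokes (the theorem is stated as a consequence of Proposition~\ref{TmM} and formula~\eqref{2.3!}, with the details deferred to the cited reference): substitute the factorization~\eqref{2.6} into the M\"obius transformations of Definition~\ref{set}, absorb $R(b,t,z)^{-1}\clp(z)$ into a new parameter matrix, and let $b\to\infty$. The genuine gap is in the step you dismiss as bookkeeping. For $\wt\clp(z)=R(b,t,z)^{-1}\clp(z)$ to be again nonsingular with property-$j$ (nonsingularity is automatic; property-$j$ is not), you need $R(b,t,z)$ to be $j$-contractive, $R^*jR\leq j$, which would follow from $\frac{\p}{\p t}(R^*jR)=R^*(F^*j+jF)R\leq 0$. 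A direct computation with $F$ from \eqref{1.2}--\eqref{1.3} (the $V_x$-terms cancel because $Vj=-jV$) gives
\begin{gather*}
F^*j+jF=2\Im\big(z^2\big)I_m+2\Im(z)V=2\Im(z)\big(2\Re(z)I_m+V\big),
\end{gather*}
which is \emph{not} nonpositive on all of $\BC_+$: for $\Im(z)>0$ it is $\leq 0$ only where $2\Re(z)I_m+V\leq 0$, i.e., by~\eqref{2.7}, on the quarter-plane-type region $\Re(z)\leq -M(t)/2$, $\Im(z)>0$. So your assertion that ``$R(b,t,z)^{-1}$ inherits the property-$j$\dots structure'' on $\BC_+$ is false as stated, and the same problem undermines your justification of the invertibility of $R_{11}+R_{12}\vp(0,z)$ (the non-degeneracy remark after~\eqref{2.1} concerns the $x$-transition matrix $u$, not $R$).

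The repair is a substantive extra step rather than bookkeeping: one first establishes the evolution formula for $z$ in the subregion of $\BC_+$ where $2\Re(z)I_m+V\leq 0$ --- this is precisely where the bound~\eqref{2.7} is used, not merely to ``justify the limit interchange'' --- and then extends it to all of $\BC_+$ by analyticity: both sides are meromorphic in $\BC_+$ and the left-hand side $\vp(t,z)$ is analytic and contractive there by Proposition~\ref{PnW1}, so the identity propagates and the apparent singularities of the right-hand side are removable. Once this domain issue is handled, the rest of your outline (cancellation of the right normalizing factor in the M\"obius transformation, passage to the limit via Remark~\ref{MB} and~\eqref{2.3!}, continuity of the linear-fractional map at $\vp(0,z)$) is sound and matches the intended argument.
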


\begin{Remark}\label{RkInvPr} According to \cite{Sa14}, the Dirac system $y_x=Gy$ $($where $G$ is given by~\eqref{1.2} and~\eqref{1.3}
and~$v$ is locally square summable$)$
is uniquely recovered from the Weyl function $\vp$. In other words, $v$ is uniquely recovered from~$\vp$, see the procedure in \cite[Theorem~4.4]{Sa14}.
The case of a~more smooth $($i.e., locally bounded$)$ $v$ was dealt with in~\cite{SaSaR}, see also references therein.
\end{Remark}

\subsection{Auxiliary linear systems for nonlinear optics equation}\label{subs2.2}

The nonlinear optics ($N$-wave) equation \eqref{NW1} is the compatibility condition
of the systems \eqref{au}, where
\begin{gather} \label{NW4}
G(x,t,z)=\I z D-\zeta(x,t), \qquad F(x,t,z)=\I z \wh D-\wh \zeta(x,t); \nonumber\\ \zeta=[D, \vr], \qquad \wh \zeta =[\wh D, \vr];
\end{gather}
see \cite{ZaMa} for the case $N=3$ and \cite{AbH} for $N>3$. We shall need
some preliminary results on the Weyl theory of the auxiliary system $y_x=Gy$
from \cite[Chapter~4]{SaSaR} (see also~\cite{SaA17}). The normalized fundamental solution~$w$
of such a system is def\/ined by the formula
\begin{gather} \label{NW5}
w_x(x,z)=\big(\I z D-\zeta(x)\big)w(x,z), \qquad w(0,z)=I_m, \qquad \zeta =- \zeta^* .
\end{gather}
Here and later we assume that $D$ is a f\/ixed matrix satisfying \eqref{NW2}. We consider
system~\eqref{NW5} with locally bounded potentials~$\zeta$, that is, potentials satisfying (for each $l<\infty$)
the inequality
\begin{gather}
  \sup\limits_{0 < x < l}
   \bigl\|
    \zeta (x)
   \bigr\|
  < \infty.
 \label{NW6}
\end{gather}
\begin{Definition} \label{NWDn2}
 A generalized Weyl function $($GW-function$)$ of system
\eqref{NW5}, where $\zeta$ is locally bounded, is an $m \times m$ matrix function $\vp$ such that for some $M>0$ it is analytic in the domain
$\BC^{-}_M = \{ z \colon \Im(z) < - M \}$ and the
inequality
\begin{gather}
  \sup\limits_{x \leq l, \, \Im (z) < -M}
   \bigl\|
    w(x,z)
    \varphi (z)\exp \{-\I z x D\}
   \bigr\|
 < \infty
 \label{NW7}
\end{gather}
holds for each
$l < \infty$.
\end{Definition}

\begin{Remark} We note that the Weyl function of the system \eqref{NW5} is def\/ined $\big($for $\zeta$ bounded on $[0,\, \infty)\big)$
by the analog \eqref{NWd6} of the inequality \eqref{2.0} and by normalization conditions \eqref{NWd7}. If~$\zeta$ is bounded on $[0,\, \infty)$, this Weyl function
coincides with the normalized {\rm GW}-function.
$($See the discussion after formula~\eqref{NWd7} and Def\/inition~\ref{NWDn2} of the {\rm GW}-function above.$)$ The fact that the Weyl function satisf\/ies~\eqref{NW7} explains the term ``generalized Weyl function'' $($or ``{\rm GW}-function''$)$.
\end{Remark}

The inverse spectral problem (ISpP) for system
\eqref{NW5} is the problem of recovering (from a~GW-function $\varphi$)
a potential
$\zeta (x) = - \zeta^* (x)$
 such that~\eqref{NW7} is valid and the diagonal entries~$\zeta_{kk}$ of~$\zeta$ equal zero $($i.e., $\zeta_{kk} \equiv 0)$.

\begin{Notation}
The notation $\mf M$
stands for
an operator mapping the pair $D$ and~$\varphi $
into the corresponding potential~$\zeta$
$($i.e., ${\mf M} (D, \varphi) = \zeta)$. In other words, ${\mf M} (D, \varphi)$ stands for a solution of the ISpP.
\end{Notation}

\noindent The following theorem (i.e., \cite[Theorem 4.8]{SaSaR}) is valid.
\begin{Theorem}\label{NWTmUniq}
For any
matrix function
$\varphi (z)$ which is
analytic and bounded in
$\BC^{-}_M$
and has the
property
\begin{gather}
  \int_{- \infty}^\infty
   \bigl(
    \varphi (z) - I_m
   \bigr)^*
   \bigl(
    \varphi (z) - I_m
   \bigr)
  d \xi
  < \infty,
\qquad
  z = \xi + \I \eta,
  \qquad
  \eta <- M ,
 \label{NW8}
\end{gather}
there is at most one solution of the ISpP.
\end{Theorem}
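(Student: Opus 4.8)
The plan is to prove uniqueness by the standard Weyl-disk argument adapted to the $N$-wave setting, contrasting with a hypothetical pair of potentials that both produce $\varphi$. Suppose $\zeta$ and $\widetilde\zeta$ are two solutions of the ISpP for the same $\varphi$, with corresponding normalized fundamental solutions $w$ and $\wti w$ of \eqref{NW5}. The key object is the transition matrix $\beta(x,z):=\wti w(x,z)^{-1}w(x,z)$, which satisfies $\beta_x=\wti w^{-1}\big(\zeta(x)-\wti\zeta(x)\big)\wti w\,\beta$; because both $\zeta$ and $\wti\zeta$ are locally bounded and skew-adjoint, $w$ and $\wti w$ are, for each fixed $x$, entire in $z$ with controlled growth. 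I would first record that the GW-function property \eqref{NW7} gives, for both potentials, a uniform bound on $w(x,z)\varphi(z)\exp\{-\I zxD\}$ and on $\wti w(x,z)\varphi(z)\exp\{-\I zxD\}$ over $x\le l$, $\Im(z)<-M$.

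The heart of the argument is to combine these two bounds. From them one obtains that the matrix function
\begin{gather*}
\exp\{\I z x D\}\,\beta(x,z)\,\exp\{-\I z x D\}
=\big(\wti w(x,z)\varphi(z)\exp\{-\I zxD\}\big)^{-1}\exp\{\I zxD\}\cdot\exp\{-\I zxD\}\big(w(x,z)\varphi(z)\exp\{-\I zxD\}\big)
\end{gather*}
is uniformly bounded on $x\le l$, $\Im(z)<-M'$ for a suitable $M'\ge M$ (here one uses invertibility of $\wti w\varphi\exp\{-\I zxD\}$, which follows from \eqref{NW8} guaranteeing $\varphi\to I_m$ and hence non-degeneracy for $|\Im z|$ large). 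Writing out this conjugated transition matrix entrywise, the $(k,\ell)$ entry is multiplied by $\exp\{\I z x(d_k-d_\ell)\}$. Since the $d_k$ are strictly ordered by \eqref{NW2}, for $k<\ell$ we have $d_k-d_\ell>0$, so on $\Im(z)<-M'$ the factor $\exp\{\I zx(d_k-d_\ell)\}$ grows like $\exp\{|\Im z|x(d_k-d_\ell)\}$; boundedness of the conjugated matrix then forces the $(k,\ell)$ entry of $\beta(x,z)-\beta(0,z)$-type quantities to be small, and a Phragmén–Lindelöf / Liouville-type argument in $z$ pins down $\beta$. More precisely, I expect $\beta(x,z)$ to be forced to be lower-triangular, and symmetrically (swapping the roles, or using the $k>\ell$ entries and the opposite sign) to be upper-triangular, hence diagonal; then differentiating $\beta_x=\wti w^{-1}(\zeta-\wti\zeta)\wti w\,\beta$ at $x=0$ and using $\beta(0,z)=I_m$ together with the vanishing of the diagonal entries $\zeta_{kk}=\wti\zeta_{kk}\equiv0$ yields $\zeta(0)=\wti\zeta(0)$; a bootstrap in $x$ (or a Gronwall estimate on $\zeta-\wti\zeta$) then gives $\zeta\equiv\wti\zeta$ on $[0,l]$ for every $l$.

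The main obstacle, I expect, is making the growth-versus-boundedness dichotomy rigorous: one must control $\wti w(x,z)^{-1}$ uniformly for large $|\Im z|$ and $x\le l$, and extract from the mere boundedness of the conjugated transition matrix the genuine triangularity of $\beta$. This is where the strict ordering $d_1>\cdots>d_m$ is indispensable — it is exactly what separates the exponential scales of the off-diagonal entries — and where one invokes a careful Phragmén–Lindelöf argument on the half-plane $\BC^-_{M'}$ together with the integrability condition \eqref{NW8}. Since the statement is quoted as \cite[Theorem~4.8]{SaSaR}, I would expect the actual proof in that source to route these estimates through the already-developed machinery of GW-functions and the associated operator identities for the $N$-wave auxiliary system, so that the write-up here can lean on Remark after \eqref{NWd7}, Definition~\ref{NWDn2}, and the Weyl-ball structure rather than reconstructing all bounds from scratch.
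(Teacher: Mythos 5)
There is a genuine gap, and a preliminary remark: the paper does not prove this statement at all --- it imports it verbatim as \cite[Theorem~4.8]{SaSaR}, whose proof (pp.~108--109 there) is the one your Corollary~\ref{CyUniq} also leans on. That proof is Fourier-analytic rather than Liouville-type: it uses the triangular representation \eqref{NW18} of $w$, observes that \eqref{NW8} together with \eqref{NW7} puts $w(x,z)\varphi(z)\exp\{-\I zxD\}-I_m$ into a one-sided Paley--Wiener class in the half-plane $\BC^-_M$, and from the resulting support constraints on Fourier transforms derives a Gel\/fand--Levitan/Krein-type integral equation for the kernel $N(x,t)$ whose kernel data depend only on $\varphi$; uniqueness of $N$, hence of $w$ and of $\zeta$, follows. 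Your transition-matrix scheme is a different route, and it breaks down at several points.

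First, your displayed identity is incorrect: the product $\bigl(\wti w\varphi\E^{-\I zxD}\bigr)^{-1}\bigl(w\varphi\E^{-\I zxD}\bigr)$ equals $\E^{\I zxD}\varphi^{-1}\beta\varphi\,\E^{-\I zxD}$, not $\E^{\I zxD}\beta\,\E^{-\I zxD}$, since $\varphi$ does not commute with $\beta=\wti w^{-1}w$. Second, the step needs $\varphi(z)$ invertible with $\bigl(\wti w\varphi\E^{-\I zxD}\bigr)^{-1}$ uniformly bounded; neither is a hypothesis of the theorem (note that $\det\varphi\neq0$ is imposed separately in \eqref{NW9} for the \emph{existence} result, precisely because it does not follow from boundedness and \eqref{NW8}, and \eqref{NW7} is only an upper bound). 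Third, and most seriously, even granting boundedness of the conjugated matrix, the conclusion that the off-diagonal entries of $\beta$ vanish does not follow: for $k<\ell$ you only learn that the $(k,\ell)$ entry is $O\bigl(\exp\{-|\Im z|\,x(d_k-d_\ell)\}\bigr)$, and a nonzero analytic function can decay exponentially in a half-plane (e.g.\ $\E^{-\I zc}$ with $c>0$ on $\Im z<0$), so no Phragm\'en--Lindel\"of or Liouville argument forces triangularity. This is exactly the obstacle you flag as ``the main obstacle'' but do not resolve; the known resolution converts the exponential dichotomy coming from $d_1>\cdots>d_m$ into statements about the \emph{supports} of one-sided Fourier transforms via \eqref{NW18}, which is qualitatively different from a pointwise growth-versus-boundedness argument. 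Finally, even if $\beta$ were shown diagonal, you would still need $\beta\equiv I_m$ before $\zeta\equiv\wti\zeta$ follows from $\beta_x=\wti w^{-1}(\wti\zeta-\zeta)\wti w\,\beta$; the Gronwall bootstrap you sketch presupposes what is to be proved. As it stands the proposal does not constitute a proof.
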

Our next corollary for systems on $(0,l)$ is immediate from the proof (see \cite[pp.~108--109]{SaSaR}) of Theorem~\ref{NWTmUniq}.

\begin{Corollary} \label{CyUniq} Assume that \eqref{NW6} is valid and let relations \eqref{NW7} and \eqref{NW8} hold
for an analytic and bounded matrix function~$\vp$. Then, $\zeta(x)$ is uniquely defined on~$(0,l)$.
\end{Corollary}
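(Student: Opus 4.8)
The plan is to reduce Corollary~\ref{CyUniq} directly to the uniqueness assertion of Theorem~\ref{NWTmUniq}, by the standard device of restricting the system on $(0,l)$ to a system on the whole half-line and checking that the hypothesis \eqref{NW7} of the theorem can be arranged to hold globally. More precisely: suppose $\zeta^{(1)}$ and $\zeta^{(2)}$ are two potentials on $(0,l)$, both satisfying \eqref{NW6}, with zero diagonal, and both producing (via their respective fundamental solutions $w^{(1)}$, $w^{(2)}$) the same analytic and bounded $\vp$ in the sense that \eqref{NW7} holds for each of them on the interval $(0,l)$. I would extend each $\zeta^{(k)}$ to a potential $\widetilde\zeta^{(k)}$ on $[0,\infty)$ by, say, setting it equal to zero (or to any fixed skew-adjoint matrix with zero diagonal) for $x>l$; this keeps local boundedness and the zero-diagonal condition. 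The point that needs a line of argument is that $\vp$ remains a GW-function of each extended system: since the extensions agree past $x=l$, the growth estimate \eqref{NW7} for $x\le l$ is exactly what is assumed, and past $x=l$ one multiplies $w^{(k)}(x,z)$ by the propagator of the constant-coefficient tail, whose norm together with the factor $\exp\{-\I z x D\}$ stays controlled in the lower half-plane $\Im(z)<-M$ (possibly after enlarging $M$), because $D>0$ forces $\|\exp\{\I z(x-l)D\}\exp\{-\I z x D\}\|=\|\exp\{-\I z l D\}\|$ to be bounded there. Hence each $\widetilde\zeta^{(k)}$ is a legitimate solution of the ISpP for the same $\vp$ on the whole half-line.

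With both $\widetilde\zeta^{(1)}$ and $\widetilde\zeta^{(2)}$ realized as solutions of the ISpP for one and the same $\vp$, and with $\vp$ analytic and bounded in $\BC^-_M$ and satisfying \eqref{NW8} by hypothesis, Theorem~\ref{NWTmUniq} applies verbatim and gives $\widetilde\zeta^{(1)}\equiv\widetilde\zeta^{(2)}$ on $[0,\infty)$, so in particular $\zeta^{(1)}\equiv\zeta^{(2)}$ on $(0,l)$. That is the assertion. The corollary is phrased as ``immediate from the proof'' rather than from the statement, so a slightly more honest route — and probably the one intended — is to observe that the proof of Theorem~\ref{NWTmUniq} on pp.~108--109 of~\cite{SaSaR} only ever uses the estimate \eqref{NW7} and the integrability \eqref{NW8} locally in $x$: the uniqueness of $\zeta$ on a given interval $(0,l)$ is extracted from the behaviour of the relevant fundamental solutions and kernels on that interval alone, so one may simply read off from that argument the conclusion that $\zeta(x)$ is determined on $(0,l)$ without ever invoking global boundedness of $\zeta$. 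I would present both remarks: the extension argument as the clean self-contained reduction, and the ``inspection of the proof'' remark as the reason the hypothesis \eqref{NW6} (local, not global, boundedness) suffices.

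The main obstacle, such as it is, is the verification in the extension argument that \eqref{NW7} survives on the tail $x>l$ — i.e.\ controlling $w^{(k)}(x,z)\,\vp(z)\,\exp\{-\I z xD\}$ for $x>l$ in terms of its value at $x=l$. This is where one uses that the tail potential is constant (so the fundamental solution there is an explicit matrix exponential) and that $D$ is a positive diagonal matrix, which makes $\exp\{\I z(x-l)D\}$ a bounded — indeed exponentially decaying — factor for $\Im(z)$ sufficiently negative; enlarging $M$ if necessary absorbs the finite propagator over $[0,l]$. Everything else is bookkeeping: the extended potentials are manifestly skew-adjoint with zero diagonal and locally bounded, so they are admissible inputs to the ISpP, and Theorem~\ref{NWTmUniq} does the rest.
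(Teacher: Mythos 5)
Your ``inspection of the proof'' remark is in fact the paper's entire argument: the corollary is declared immediate from the proof of Theorem~\ref{NWTmUniq} precisely because that proof in \cite[pp.~108--109]{SaSaR} determines $\zeta$ on $(0,l)$ from the data on $(0,l)$ alone, and the paper develops it no further than you do. The problem is the argument you put forward as the primary, self-contained reduction: extending $\zeta$ by zero (or by any constant) past $x=l$ does \emph{not} preserve \eqref{NW7}, and the step where you claim it does contains a concrete error. For $x>l$ the extended fundamental solution is $w(x,z)=\exp\{\I z(x-l)D\}w(l,z)$, so
\begin{gather*}
w(x,z)\varphi(z)\exp\{-\I z x D\}=\exp\{\I z(x-l)D\}\,A(z)\,\exp\{-\I z(x-l)D\},\\
A(z):=w(l,z)\varphi(z)\exp\{-\I z l D\}.
\end{gather*}
The two exponential factors are not adjacent --- $A(z)$ sits between them and does not commute with $D$ --- so the quantity you compute, $\|\exp\{\I z(x-l)D\}\exp\{-\I z xD\}\|=\|\exp\{-\I z lD\}\|$, is not the one that needs to be bounded. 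What the conjugation actually does is multiply the $(i,j)$ entry of $A(z)$ by $\exp\{\I z(x-l)(d_i-d_j)\}$, whose modulus equals $\exp\{|\Im(z)|(x-l)(d_i-d_j)\}$ for $\Im(z)<0$. Since $d_1>d_2>\cdots>d_m$, this blows up as $\Im(z)\to-\infty$ for every strictly upper-triangular entry ($i<j$) and every $x>l$, so the supremum in \eqref{NW7} over $\Im(z)<-M$ is infinite already for fixed $x$ slightly larger than $l$, unless the upper-triangular part of $A(z)$ happens to decay correspondingly --- which the hypotheses do not guarantee.

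This failure is not an accident of the choice ``extend by zero'': a GW-function encodes the potential on the whole half-line, so one cannot expect an arbitrary continuation of $\zeta$ beyond $l$ to admit the same $\varphi$ as a GW-function. Choosing a continuation for which \eqref{NW7} survives is tantamount to solving the inverse problem on the tail, which is circular (and whose solvability requires the stronger hypotheses \eqref{NW9}--\eqref{NW10} of Remark~\ref{NWInvPr}). So the reduction to the \emph{statement} of Theorem~\ref{NWTmUniq} as a black box does not work here; one really does have to go through its proof and observe its locality in $x$, which is exactly what the paper does.
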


\begin{Remark}\label{NWInvPr} Under somewhat stronger $($than in~\eqref{NW8}$)$
restrictions, the solution of the ISpP always exists. Namely, if for some matrix $\phi_0$ and some $M>0$
 we have
\begin{gather} \label{NW9}
\sup_{\Im(z)<-M}
   \big\| \,
   z ( \varphi (z) - I_m) \,
   \big\|
   < \infty, \quad \det \, \varphi (z) \not= 0 \quad {\mathrm{for}} \quad \Im(z) <-M,
     \\
   (\xi+\I\eta)\big( \varphi (\xi+\I\eta) - I_m - \phi_0 / (\xi+\I \eta) \big)
   \in L_{m \times m}^2 (- \infty, \infty)
   \quad \text{for all f\/ixed} \
   \eta<-M,
 \label{NW10}
\end{gather}
then ${\mf M} (D, \varphi)$ is constructed in \cite[Theorem~4.10]{SaSaR}.
\end{Remark}

The situation becomes simpler when
$\zeta(x)$ is uniformly bounded on $[0, \infty)$, that is,
\begin{gather}
  \sup\limits_{0 < x < \infty}
   \bigl\| \zeta (x) \bigr\|
 \leq M_0 .
 \label{NWd8}
\end{gather}
We recall that  a Weyl function of system \eqref{NW5} is introduced in another way
than a GW-function. Namely, a Weyl function
 is an analytic $m \times m$ matrix function~$\varphi (z)$,
satisfying for certain
$M > 0$
and
$r > 0$
and for all~$z$
from the domain
$\BC^{-}_M = \{ z \colon \Im(z) < - M \}$
the inequality
\begin{gather}
  \int_0^\infty
      \exp \{\I \overline z x D \}
     \varphi (z)^*
     w (x,z)^*
     w (x,z)
     \varphi (z)
     \exp
      \bigl\{{-}
        (\I z D + r I_m )x
      \bigr\}
  dx
 < \infty,
 \label{NWd6}
\end{gather}
and the normalization conditions on the entries $\varphi_{ij} (z)$:
\begin{gather}
   \varphi_{ij} (z) \equiv 1 \qquad \mbox{for} \quad i= j, \qquad
   \varphi_{ij} (z) \equiv 0 \qquad \mbox{for} \quad i > j.
 \label{NWd7}
\end{gather}
When \eqref{NWd8} holds,
a Weyl function of system \eqref{NW5} exists and is unique.
Moreover, for that case~$\vp$ is the unique GW-function (of system \eqref{NW5} with the given $\zeta$)
satisfying normalization conditions~\eqref{NWd7}. In order to construct this Weyl (and simultaneously GW-) function
we use matrices $j$ of the form~\eqref{1.3} for each $1\leq m_1 < m$, that is, we set
\begin{gather*}
J_k:= \left[
     \begin{matrix}
       I_k & 0   \\
       0 & -I_{m-k}
     \end{matrix}
    \right], \qquad   1\leq k < m.
\end{gather*}
Now, the Weyl function is constructed \cite[pp.~103--106]{SaSaR}
in the following way.

First, for each $ k $, we introduce a class of
$m \times (m-k)$ matrix functions $\cQ_k$, which are meromorphic in some semi-plane $\BC^{-}_{M_k}$ and satisfy the inequalities
\begin{gather}
\cQ_k (z)^*
  \cQ_k(z)
 > 0, \qquad
  \cQ_k (z)^*
  J_k
  \cQ_k (z)
\leq 0,
 \label{NW12}
\end{gather}
excluding, possibly, isolated points. These $\cQ_k$ are called nonsingular with property-$J_k$.
Assu\-ming $M_k>2M_0/(d_k-d_{k+1})$ and using \eqref{NW12}, one can show that the matrix function
\begin{gather}
  \psi_k (x,z)
= \begin{bmatrix}I_k &0 \end{bmatrix}
 w (x,z)^{-1}
 \cQ_k (z)
 \left(
  \begin{bmatrix}0 &I_{m-k} \end{bmatrix}
  w (x,z)^{-1}
  \cQ_k(z)
 \right)^{-1}
 \label{NW13}
\end{gather}
is well-def\/ined for $x \geq 0$, $z \in \BC^{-}_{M_k}$, and satisf\/ies the inequality
\begin{gather}
\begin{bmatrix} \psi_k (z)^* &I_{m-k} \end{bmatrix}
 J_k
 \left[
  \begin{matrix}
    \psi_k (z) \\
    I_{m-k}
  \end{matrix}
 \right]
\leq 0 , \qquad {\mathrm{i.e.,}} \qquad \psi_k (z)^*
  \psi_k (z)
\leq I_{m-k} .
 \label{NW14}
\end{gather}
The set of matrices $\psi_k(x,z)$ given by \eqref{NW13}, where $x$ and $z$ are f\/ixed and matrix functions $\cQ_k(z)$ are nonsingular with property-$J_k$, is denoted
by $\cN_k(x,z)$. These sets are embedded and have a~point limit, that is, similar to \eqref{2.3} and \eqref{2.3!} we have
 \begin{gather}
  \breve \psi_k(z) = \bigcap\limits_{x < \infty}
           {\cal N}_k (x,z), \qquad \breve \psi_k(z)=\lim_{x\to \infty} \psi_k(x,z),
\qquad
  z \in \BC_{M_k}^{-} .
 \label{NW15}
\end{gather}
In this way we recover the $(k+1)$th column of the Weyl function $\varphi$. More precisely, we have
\begin{gather}
  \left\{
   \varphi_{i, k+1}(z)
  \right\}_{i=1}^k
 = \breve \psi_k (z)\begin{bmatrix} 1\\ 0 \\ \ldots \\0
 \end{bmatrix}, \qquad \Im(z)<-M \label{NW16}
\end{gather}
for any $M>\max\limits_{1\leq k<m}\big(2M_0/(d_k-d_{k+1})\big)$. There is also an inverse transformation \cite[Remark~4.6]{SaSaR}, which expresses $\breve \psi_k$ via $\varphi$:
\begin{gather}
  \breve\psi_k^{\,} (z)
= \begin{bmatrix} I_k & 0\end{bmatrix}
 \varphi (z)
 \left[
  \begin{matrix}
    0  \\
    I_{m-k}
  \end{matrix}
 \right]
 \biggl(
 \begin{bmatrix} 0 & I_{m-k}\end{bmatrix}
  \varphi (z)
  \left[
  \begin{matrix}
    0  \\
    I_{m-k}
  \end{matrix}
  \right]
 \biggr)^{-1} .
 \label{NW17}
\end{gather}
Finally, we will need a representation of $w(x,z)$ on intervals $0\leq x\leq l$, $l<\infty$, \cite[equation~(4.37)]{SaSaR} :
\begin{gather}
  w(x,z) = \exp \{ \I z x D\}
    + \int_{d_m x}^{d_1 x}
           \exp \{\I zt\}
     N(x,t)
     dt, \qquad \sup_{x\leq l} \| N(x,t) \| < \infty.
 \label{NW18}
\end{gather}

\section{NLS with quasi-analytic boundary conditions}\label{Quasi}

First publications on initial-boundary value problems for integrable systems (see, e.g., \cite{Kv, KaNe}) appeared only several years after
the great breakthrough for Cauchy problems for such systems.
Interesting numerical \cite{ChuX}, uniqueness \cite{BW83, Ton} and local existence \cite{Holm, Kri} results followed.
Special {\it linearizable} cases of boundary conditions were found using symmetrical reduction \cite{Skl} or BT (B\"acklund transformation) method
\cite{BiTa, Hab}. Global existence results for Dirichlet and Neumann initial-boundary value problems (for cubic NLS equations)
were obtained using PDE methods in~\cite{CB} and~\cite{Kai}, respectively. Interesting approaches were developed by
D.J.~Kaup and H.~Steudel~\cite{KaSt}, by P.~Sabatier (elbow scattering)~\cite{Sab1, Sab3} and by
 A.S.~Fokas (global relation method)
 \cite{Fok}, see also some discussions on the corresponding dif\/f\/iculties and open problems in~\cite{Ash, BoFo}.
Since many publications were dedicated to the initial-boundary value problems for NLS equations, it is of special interest
that a wide class of solutions of NLS in a semi-strip is uniquely determined by the initial condition.

We note that the case of quasi-analytic boundary (or initial) conditions is important also because related suggestions
that initial and boundary conditions (or even solutions) belong to the so-called Schwartz class of functions are often used
for simplicity (see, e.g., \cite{Fok0}). Our result shows that one should be rather careful with such suggestions,
so that they agree with the established interrelations between initial and boundary conditions.

Recall that the domain $\cD$ is def\/ined in~\eqref{1.4}.

\begin{Notation}\label{NnB1}
We consider $m_1 \times m_2$ matrix functions $v(x,t)$,
which are continuously dif\/fe\-ren\-tiab\-le and are
such that~$v_{xx}$ exists
on the semi-strip~$\cD$.
Moreover, we require that for each $k$ there is a~value $\ve_k=\ve_k(v)>0$ such that $v$
is~$k$ times continuously dif\/ferentiable with respect to $x$ in the square
\begin{gather} \label{B2'}
\cD({\ve_k})=\{(x, t)\colon  0\leq x\leq \ve_k, \, 0\leq t\leq \ve_k\}, \qquad \cD({\ve_k})\subset \cD.
\end{gather}
The class of such functions $v(x,t)$ is denoted by $C_{\ve}(\cD)$.
\end{Notation}

\noindent Without loss of generality, we assume that the values $\ve_k$ in \eqref{B2'} monotonically decrease.
\begin{Proposition} \label{prequa} Assume that $v\in C_{\ve}(\cD)$ satisfies the dNLS equation \eqref{1.1} on $\cD$.
Then, for each integer $r\geq 0$ and values $0\leq k\leq r$, the functions $\left(\frac{\p^k}{\p t^k}v\right)(x,0)$ and $\left(\frac{\p^k}{\p t^k}v_x\right)(x,0)$
may be uniquely recovered $($on the interval $0\leq x\leq \ve_
{4r})$ from the initial condition
\begin{gather} \label{IC1}
v(x,0)=\cV(x).
\end{gather}
Moreover, on the domain $\cD(\ve_{4(r+1)})$ the functions $\left(\frac{\p^{k+1}}{\p t^{k+1}}v\right)$, $\left(\frac{\p^{k+1}}{\p t^{k+1}}v\right)_x$
and $\left(\frac{\p^{k+1}}{\p t^{k+1}}v\right)_{xx}$
exist and are continuous, and the equalities
\begin{gather} \label{B5}
\left(\frac{\p^k}{\p t^k}v\right)_{xt}= \left(\frac{\p^k}{\p t^k}v\right)_{tx}, \qquad
\left(\frac{\p^k}{\p t^k}v\right)_{xxt}= \left(\frac{\p^k}{\p t^k}v\right)_{txx}
\end{gather}
hold, whereas both sides of these equalities are again continuous.
 For $0\leq k\leq r$ and $0\leq \ell\leq s$ the functions $\left(\frac{\p^{\ell}}{\p x^{\ell}}\frac{\p^{k+1}}{\p t^{k+1}}v\right)$
exist and are continuous in the domains $\cD(\ve_{s+4(r+1)})$.
\end{Proposition}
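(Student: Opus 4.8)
The plan is to argue by induction on $r$, establishing the whole package at each step: the recovery of the $t$-derivatives at $t=0$ on $[0,\ve_{4r}]$ together with the existence, continuity and commutation relations \eqref{B5} on the squares $\cD(\ve_{4(r+1)})$ and $\cD(\ve_{s+4(r+1)})$. The one device behind everything is to read the dNLS equation \eqref{1.1} and its differentiated forms as an identity expressing a $t$-derivative of $v$ through one fewer $t$-derivative and two more $x$-derivatives:
\begin{gather} \label{plan1}
\p_t^{k+1}v=\frac{\I}{2}\big(\p_x^{2}\p_t^{k}v-2\,\p_t^{k}(vv^{*}v)\big),
\end{gather}
and, after $\ell$ further $x$-differentiations, $\p_x^{\ell}\p_t^{k+1}v=\frac{\I}{2}\big(\p_x^{\ell+2}\p_t^{k}v-2\,\p_x^{\ell}\p_t^{k}(vv^{*}v)\big)$, where by the Leibniz rule $\p_x^{\ell}\p_t^{k}(vv^{*}v)$ is a finite sum of products of factors $\p_x^{a}\p_t^{b}v$ and $\p_x^{a}\p_t^{b}v^{*}$ with $a\le\ell$, $b\le k$. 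Passing from $\p_t^{k}v_{xx}$ to $\p_x^{2}\p_t^{k}v$ in \eqref{plan1} uses the commutation $\p_t^{k}\p_x^{2}v=\p_x^{2}\p_t^{k}v$, which is \eqref{B5} iterated over the lower levels.

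For the base case $r=0$ the recovery is immediate: $v(x,0)=\cV(x)$ and $v_x(x,0)=\cV'(x)$. Since $v\in C_{\ve}(\cD)$, the function $v$ is $C^{\ell+2}$ in $x$ on $\cD(\ve_{\ell+2})$, so the $x$-differentiated form of \eqref{plan1} with $k=0$ shows that $\p_x^{\ell}v_t$ exists and is continuous on $\cD(\ve_{\ell+2})\supseteq\cD(\ve_{s+4})$ for $0\le\ell\le s$; the case $\ell=2$ gives $v_{txx}$ on $\cD(\ve_4)$. Then \eqref{B5} with $k=0$ follows from the classical theorem on equality of mixed partials: $v_x$, $v_t$, $v_{tx}$ all exist and $v_{tx}$ is continuous on $\cD(\ve_3)$, so $v_{xt}=v_{tx}$ there; applying the same theorem to $v_x$ in place of $v$, with $(v_{xt})_x=(v_{tx})_x=v_{txx}$ continuous on $\cD(\ve_4)$, gives $v_{xxt}=v_{txx}$, both sides being continuous.

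For the inductive step, assume the statement for $r$. The parts of the statement for $r+1$ concerning $t$-orders already treated at level $r$ are inherited verbatim on the smaller squares, so only the recovery of $(\p_t^{r+1}v)(x,0)$ and $(\p_t^{r+1}v_x)(x,0)$ and the ``moreover'' facts for $\p_t^{r+2}v$ remain. Evaluating \eqref{plan1} with $k=r$ at $t=0$, noting $\big(\p_x^{2}\p_t^{r}v\big)(x,0)=\frac{d^{2}}{dx^{2}}\big((\p_t^{r}v)(x,0)\big)$, while the induction hypothesis together with $v\in C_{\ve}(\cD)$ recovers $(\p_t^{b}v)(x,0)$ for $b\le r$ and supplies the needed $x$-smoothness, expresses $(\p_t^{r+1}v)(x,0)$, and then $(\p_t^{r+1}v_x)(x,0)=\frac{d}{dx}\big((\p_t^{r+1}v)(x,0)\big)$, as explicit expressions in $\cV$ and its $x$-derivatives; in particular these are uniquely determined on $[0,\ve_{4(r+1)}]$. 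Next, $\p_t^{r+2}v$ exists by \eqref{plan1} with $k=r+1$ (whose right-hand side is available on $\cD(\ve_{4(r+1)})$ by the induction hypothesis), and applying $\p_x^{\ell}$ to that identity — legitimate since, by the induction hypothesis, $\p_x^{\ell+2}\p_t^{r+1}v$ exists on $\cD(\ve_{\ell+2+4(r+1)})$ and each Leibniz factor $\p_x^{a}\p_t^{b}v$ with $a\le\ell$, $b\le r+1$ exists on a larger square — shows that $\p_x^{\ell}\p_t^{r+2}v$ exists and is continuous on $\cD(\ve_{\ell+2+4(r+1)})\supseteq\cD(\ve_{s+4(r+2)})$ for $0\le\ell\le s$; the case $\ell=2$ gives $\big(\p_t^{r+2}v\big)_{xx}$ on $\cD(\ve_{4(r+2)})$, exactly the asserted index. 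Finally \eqref{B5} with $k=r+1$ follows, as in the base case, by applying the mixed-partials theorem to $g:=\p_t^{r+1}v$ and then to $g_x$, the needed continuity of $g_{tx}=(\p_t^{r+2}v)_x$ and $g_{txx}=(\p_t^{r+2}v)_{xx}$ being among the facts just established; iterating \eqref{B5} over the levels $\le r$ also legitimizes the commutations $\p_t^{r+1}\p_x v=\p_x\p_t^{r+1}v$ and $\p_t^{r+1}\p_x^{2}v=\p_x^{2}\p_t^{r+1}v$ used above.

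There is no conceptual obstacle here; the work is organizational, and the one point to watch is the simultaneous bookkeeping of the two independent differentiation orders against the decreasing sequence $\{\ve_k\}$. Each use of \eqref{plan1} ``spends'' two $x$-derivatives of the lower $t$-derivative, while the ``moreover'' clause then demands two further $x$-derivatives of $\p_t^{k+1}v$; this is exactly what forces the gap of four between consecutive levels and pins down the indices $4(r+1)$ and $s+4(r+1)$. The other point needing care is the invocation of the mixed-partials theorem along the edges $x=0$, $t=0$ and at the corner of $\cD(\ve_k)$, where all derivatives are one-sided; this is covered by the paper's blanket convention on one-sided derivatives on boundaries, and the continuity hypotheses the theorem requires are precisely those furnished by \eqref{plan1} and its $x$-differentiated forms.
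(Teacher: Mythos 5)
Your proof is correct and follows essentially the same route as the paper: induction on $r$, using the dNLS equation (and its $t$- and $x$-differentiated forms) to express $\p_t^{k+1}v$ through lower $t$-derivatives and extra $x$-derivatives supplied by the $C_{\ve}(\cD)$ hypothesis, and then a mixed-partials theorem valid up to the boundary of the closed square (the paper's Proposition~\ref{PnB2}) applied first to $\p_t^{k}v$ and then to its $x$-derivative to get \eqref{B5}. Your bookkeeping of the indices $4(r+1)$ and $s+4(r+1)$ against the decreasing sequence $\{\ve_k\}$ matches the paper's, so nothing further is needed.
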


In order to prove this proposition, we need a stronger version of the well-known Clairaut's (or Schwarz's) theorem on mixed derivatives.
We need this version for the closed square $\cD(\ve)$ (as in Proposition~3.2 from \cite{Sa14b}), which
statement easily follows from the proofs of the mixed derivatives theorem for open domains (see, e.g.,~\cite{See}).
\begin{Proposition}[\cite{Sa14b}]\label{PnB2} If the functions $f$, $f_t$ and $f_{tx}$ exist and are continuous on $\cD(\ve)$
and the derivative $f_x(x, 0)$ exists for $0\leq x \leq \ve$, then $f_x$ and $f_{xt}$ exist on $\cD(\ve)$
and $f_{xt}=f_{tx}$.
\end{Proposition}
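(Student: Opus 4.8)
The plan is to build $f_x$ explicitly by integration in $t$ and then to differentiate the resulting closed-form expression in $t$, using the continuity of $f_{tx}$ up to the boundary at each step. First I would invoke the fundamental theorem of calculus in the $t$-variable: since $f_t$ exists and is continuous on $\cD(\ve)$, for each fixed $x\in[0,\ve]$ and every $t\in[0,\ve]$ one has
\begin{gather*}
f(x,t)-f(x,0)=\int_0^t f_t(x,s)\,ds.
\end{gather*}
Here, and throughout, derivatives on the edges of $\cD(\ve)$ are understood as the appropriate one-sided derivatives, and all continuity is continuity up to the boundary; this is precisely the adaptation of the open-domain argument to the closed square.

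Next I would differentiate the right-hand side with respect to $x$. Because the integrand $f_t(x,s)$ is differentiable in $x$ with partial derivative $f_{tx}(x,s)$, and $f_{tx}$ is jointly continuous on the compact set $\cD(\ve)$, the classical theorem on differentiation under the integral sign applies and gives
\begin{gather*}
\frac{\p}{\p x}\big(f(x,t)-f(x,0)\big)=\int_0^t f_{tx}(x,s)\,ds.
\end{gather*}
Since $f_x(x,0)$ is assumed to exist for $0\le x\le\ve$, adding it to both sides shows that $f_x(x,t)=\frac{\p}{\p x}f(x,t)$ exists throughout $\cD(\ve)$ and is given by
\begin{gather*}
f_x(x,t)=f_x(x,0)+\int_0^t f_{tx}(x,s)\,ds.
\end{gather*}
This is the key point: the boundary hypothesis on $f_x(x,0)$ is exactly what upgrades the existence of the $x$-derivative of the \emph{difference} $f(x,t)-f(x,0)$ to the existence of $f_x(x,t)$ itself.

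Finally I would differentiate this last identity in $t$. The term $f_x(x,0)$ is constant in $t$, while, $f_{tx}$ being continuous, the fundamental theorem of calculus shows that $t\mapsto\int_0^t f_{tx}(x,s)\,ds$ is differentiable in $t$ with derivative $f_{tx}(x,t)$. Hence $f_{xt}(x,t)=\p(f_x)/\p t$ exists on $\cD(\ve)$ and equals $f_{tx}(x,t)$, which is the desired conclusion.

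I expect the only genuine obstacle to be the bookkeeping at the boundary: the two applications of the fundamental theorem of calculus and the differentiation-under-the-integral step must each be carried out with one-sided derivatives along the edges $x=0$, $x=\ve$, $t=0$, $t=\ve$. Because $f_t$ and $f_{tx}$ are assumed continuous up to the boundary and $f_x(x,0)$ exists on the entire bottom edge, each of these classical results retains its validity in one-sided form, so no additional hypotheses are required. This is exactly why the statement for the closed square follows easily from the proofs of the open-domain mixed-derivatives theorem.
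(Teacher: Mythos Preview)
Your argument is correct and is precisely the approach the paper has in mind: the paper does not prove Proposition~\ref{PnB2} but cites \cite{Sa14b} and remarks that the statement ``easily follows from the proofs of the mixed derivatives theorem for open domains (see, e.g.,~\cite{See})'', and Seeley's note is exactly the Fubini--Leibniz integral argument you reproduce, adapted here to one-sided derivatives on the closed square.
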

\begin{proof}[Proof of Proposition \ref{prequa}]
We prove Proposition \ref{prequa} by induction. First, consider the case $r=0$.
Clearly, $v(x,0)$ and $v_x(x,0)$ are given by the initial condition \eqref{IC1}.
Since the right-hand side of \eqref{1.1} is two times continuously dif\/ferentiable with respect to $x$ in $\cD(\ve_4)$,
we derive that $v_t$, $v_{tx}$ and $v_{txx}$ exist and are continuous in $\cD(\ve_4)$:
\begin{gather*}
2v_t=\I(v_{xx}-2vv^*v), \qquad 2v_{tx}=\I\left(v_{xxx}-2\frac{\p}{\p x}(vv^*v)\right), \\ 2v_{txx}=\I\left(v_{xxxx}-2\frac{\p^2}{\p x^2}(vv^*v)\right).
\end{gather*}
Moreover, putting $f=v$ we see that conditions of Proposition~\ref{PnB2}
are fulf\/illed and the f\/irst equality in~\eqref{B5} holds for $k=0$. Putting $f=v_x$ and taking into account that the f\/irst equality
in \eqref{B5} yields $v_{xt}=v_{tx}$ and $v_{xtx}=v_{txx}$, we see that conditions of Proposition~\ref{PnB2} hold also for $f=v_x$.
That is, $v_{xxt}$ exists and equals $v_{xtx}=v_{txx}$. Thus, \eqref{B5} is proved for $k=0$ (i.e., for $r=0$). In view of~\eqref{1.1}, it is immediate that the
last statement of Proposition~\ref{prequa} is also valid for $r=0$.

Next, assuming that the statements of Proposition \ref{prequa} hold for all $0\leq r \leq r_0$, let us prove them for $r=r_0+1$.
Dif\/ferentiating both sides of \eqref{1.1} $r_0$ times with respect to $t$ and taking into account (for $r_0>0$ and $k\leq r_0-1$) the second
equality in \eqref{B5}, we express $\left(\frac{\p^{r_0+1}}{\p t^{r_0+1}}v\right)(x,0)$ via derivatives which we already know.
Then, from the f\/irst equality in~\eqref{B5}, we obtain the formula $\left(\frac{\p^{r_0+1}}{\p t^{r_0+1}}v_x\right)(x,0)=\left(\frac{\p^{r_0+1}}{\p t^{r_0+1}}v\right)_x(x,0)$
and an expression for $\left(\frac{\p^{r_0+1}}{\p t^{r_0+1}}v_x\right)(x,0)$ follows.

 Dif\/ferentiating both sides of \eqref{1.1} $r_0+1$ times and using \eqref{B5} for $r= r_0$, we see also that the derivative
 $\frac{\p^{r_0+2}}{\p t^{r_0+2}}v$ exists and is continuous. Furthermore, dif\/ferentiating \eqref{1.1} $r_0+1$ times with respect to $t$
 and once or twice with respect to $x$, from the last statement of our proposition (for the case $r=r_0$) we derive that the derivatives
 $\left(\frac{\p^{r_0+2}}{\p t^{r_0+2}}v\right)_{x}$ and $\left(\frac{\p^{r_0+2}}{\p t^{r_0+2}}v\right)_{xx}$ exist and are continuous
 in $\cD(\ve_{4(r_0+2)})$. Now, we see that the conditions of Proposition \ref{PnB2}
are fulf\/illed for $f=\frac{\p^{r_0+1}}{\p t^{r_0+1}}v$, and so the f\/irst equality in~\eqref{B5} holds for $k\leq r_0+1$.
Using this f\/irst equality in \eqref{B5}, we derive that the conditions of Proposition~\ref{PnB2}
are fulf\/illed for $f=\left(\frac{\p^{r_0+1}}{\p t^{r_0+1}}v\right)_x$
and therefore the second equality in~\eqref{B5} holds for $k\leq r_0+1$.
Dif\/ferentiating again both sides of~\eqref{1.1}, we show that the last statement in Proposition~\ref{prequa} holds for $r =r_0+1$.
\end{proof}

The class $C\big(\{\wt M_k\}\big)$
consists of all inf\/initely dif\/ferentiable on $[0, a)$ scalar functions $f$ such that for some
$c(f) \geq 0$ and for f\/ixed constants $\wt M_k >0 $ $(k \geq 0)$ we have
\begin{gather*} 
\left| \frac{d^k f}{dx^k}(x)\right|\leq c(f)^{k+1}\wt M_k \qquad \text{for all} \quad x\in [0, a).
\end{gather*}
Here, we use the notation $\wt M_k$ (as well $\wt M$ below) because the upper estimates~$M$ (without tilde) were already used
in Section~\ref{Prel}.
Recall that $C\big(\{\wt M_k\}\big)$ is called quasi-analytic if
for the func\-tions~$f$ from this class and for any $0\leq x <a$ the equalities $\frac{d^k f}{dx^k}(x)=0$ $(k\geq 0)$ yield $f\equiv 0$.
According to the famous Denjoy--Carleman theorem, the equality
\begin{gather*} 
\sum_{n=1}^{\infty}\frac{1}{L_n}=\infty, \qquad L_n:=\inf_{k\geq n}\wt M_k^{1/k}
\end{gather*}
implies that the class $C\big(\{\wt M_k\}\big)$ is quasi-analytic.
\begin{Corollary}\label{CyRec} If $v(x,t)$ satisfies conditions of Proposition~{\rm \ref{prequa}} and the entries of $v(0,t)$
or $v_x(0,t)$ are quasi-analytic, then the matrix functions $v(0,t)$ or $v_x(0,t)$, respectively, are uniquely defined
by the initial condition~\eqref{IC1}.
\end{Corollary}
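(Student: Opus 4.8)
The plan is to apply Proposition~\ref{prequa} at the single abscissa $x=0$ and then invoke the classical fact that a function in a quasi-analytic class is determined by its Taylor coefficients at one point. Fix an integer $k\geq 0$ and use Proposition~\ref{prequa} with $r=k$: since $0\in[0,\ve_{4k}]$, the values $\left(\frac{\p^k}{\p t^k}v\right)(0,0)$ and $\left(\frac{\p^k}{\p t^k}v_x\right)(0,0)$ are uniquely recovered from the initial condition~\eqref{IC1}. Proposition~\ref{prequa} also supplies the existence and continuity, in a neighbourhood $\cD(\ve)$ of the corner, of the $t$-derivatives of $v$ and of $v_x$ that are involved here; hence the restrictions of these functions to the edge $x=0$ are $k$-fold differentiable in $t$ and $\frac{d^k}{dt^k}\big(v(0,t)\big)\big|_{t=0}=\left(\frac{\p^k}{\p t^k}v\right)(0,0)$, and likewise for $v_x$. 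Thus every Taylor coefficient at $t=0$ of each scalar entry of $v(0,t)$, respectively of $v_x(0,t)$, is a quantity fixed by $\cV$ alone.

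To upgrade ``Taylor data at $t=0$ fixed'' to ``the trace fixed on $[0,a)$'', I would compare $v$ with any second matrix function $\wt v$ that satisfies the same hypotheses and has the same initial condition $\cV$, and set $w(t):=v(0,t)-\wt v(0,t)$ (resp.\ $w(t):=v_x(0,t)-\wt v_x(0,t)$). By the previous step $\frac{d^k w}{dt^k}(0)=0$ for all $k\geq 0$. Each entry of $w$ is a difference of functions from quasi-analytic classes $C\big(\{\wt M_k\}\big)$, and such classes are closed under subtraction (the estimate $|f^{(k)}|\leq c(f)^{k+1}\wt M_k$ is inherited by $f-g$, with a new constant), so each entry of $w$ again lies in a quasi-analytic class. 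The Denjoy--Carleman theorem quoted above then gives $w\equiv 0$ on $[0,a)$; that is, $v(0,t)\equiv\wt v(0,t)$ (resp.\ $v_x(0,t)\equiv\wt v_x(0,t)$), which is exactly the asserted uniqueness.

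The genuinely analytic content sits inside Proposition~\ref{prequa}, so I do not expect a real obstacle; the two places that need a little care are (i) identifying the interior partial $t$-derivatives at the corner $(0,0)$ with the one-sided derivatives of the boundary traces $v(0,\cdot)$ and $v_x(0,\cdot)$, which is read off from the continuity statements of Proposition~\ref{prequa} (and, if one wishes, from the iterated application of Proposition~\ref{PnB2} used in its proof), and (ii) verifying that the difference $w$ remains within a quasi-analytic class, so that the Denjoy--Carleman theorem can be applied to $w$ rather than to $v(0,\cdot)$ and $\wt v(0,\cdot)$ separately.
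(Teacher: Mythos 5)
Your argument is correct and is exactly the (implicit) argument the paper intends: the corollary is stated without proof as an immediate consequence of Proposition~\ref{prequa} (which yields all Taylor coefficients of $v(0,\cdot)$ and $v_x(0,\cdot)$ at $t=0$ from $\cV$) together with the defining property of quasi-analytic classes. Your extra care on points (i) and (ii) --- identifying corner derivatives with derivatives of the boundary traces, and checking that the difference of two class members again lies in a quasi-analytic class so that the vanishing-of-all-derivatives criterion applies --- is exactly what is needed to make the ``immediate'' step rigorous.
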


Let us consider the case, where both matrix functions $v(0,t)$ and $v_x(0,t)$ are quasi-analytic.
More precisely, we assume that the entries $v_{ij}(0,t)$ of $v(0,t)$ belong to some quasi-analytic classes $C(\{\wt M_k(i,j)\}) $,
the entries $ \big(v_{ij}\big)_x(0,t) $ of $ v_x(0,t) $ belong to some quasi-analytic classes $C(\{\wt M_k^{+}(i,j)\})$, and, in this case,
we say that $v(0,t) \in C([0,a); \wt M)$ and $v_x(0,t) \in C([0,a); \wt M^+)$, where
\begin{gather*}
\wt M=\{\wt M_k(i,j)\} \qquad {\mathrm{and}} \qquad \wt M^+=\{\wt M^+_k(i,j)\}.
\end{gather*}

Now, using Proposition \ref{PnW1}, Theorem \ref{evol}, Remark~\ref{RkInvPr} and Corollary~\ref{CyRec} we obtain the main theorem
in this section.
\begin{Theorem} Assume that $v\in C_{\ve}(\cD)$ satisfies the dNLS equation \eqref{1.1} on $\cD$, that~\eqref{2.7} holds
 and that boundary values $v(0,t)$ and $v_x(0,t)$ belong to quasi-analytic classes $ C([0,a); \wt M)$ and $C([0,a); \wt M^+)$,
 respectively. Then, $v$ is uniquely defined by the initial condition~\eqref{IC1}.
\end{Theorem}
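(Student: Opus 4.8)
The plan is to move through the Weyl function in three stages: first recover the boundary traces $v(0,t)$ and $v_x(0,t)$ from the initial profile $\cV$; then recover the Weyl function $\vp(t,z)$ of the Dirac system \eqref{1.5} for every $t\in[0,a)$; and finally recover $v(\cdot,t)$ itself by the inverse spectral transform. Since each stage is a uniqueness statement, chaining them yields that $v$ is uniquely determined on all of $\cD$ by $\cV$.

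\emph{Stage 1 (boundary traces).} Apply Proposition \ref{prequa} at the single point $x=0$, which lies in every interval $[0,\ve_{4r}]$: for all $k\geq 0$ the values $\big(\frac{\p^k}{\p t^k}v\big)(0,0)$ and $\big(\frac{\p^k}{\p t^k}v_x\big)(0,0)$ are uniquely determined by the initial condition \eqref{IC1}. These are exactly the $t$-Taylor coefficients at $t=0$ of the entries of $v(0,t)$ and of $v_x(0,t)$. By hypothesis $v(0,t)\in C([0,a);\wt M)$ and $v_x(0,t)\in C([0,a);\wt M^+)$, so, as in Corollary \ref{CyRec} (via the Denjoy--Carleman theorem), any two functions in the same quasi-analytic class with identical derivatives at $t=0$ coincide on $[0,a)$. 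Hence $v(0,t)$ and $v_x(0,t)$ are uniquely determined by $\cV$.

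\emph{Stage 2 (Weyl function and its evolution).} Since $v$ is continuously differentiable on $\cD$ and \eqref{2.7} holds, $V(\cdot,t)$ is, for each fixed $t$, a locally summable (indeed bounded) potential, so the Dirac system \eqref{1.5} has a unique Weyl function $\vp(t,z)$ by Proposition \ref{PnW1}; in particular $\vp(0,z)$ is determined by $V(x,0)$, hence by $\cV$. The matrix $F$ of \eqref{1.2} evaluated at $x=0$ depends only on $z$, $j$, $V(0,t)$ and $V_x(0,t)$, so the fundamental solution $R(t,z)$ determined by $R_t=F(0,t,z)R$, $R(0,z)=I_m$ (see \eqref{2.5}) is a function of $v(0,t)$ and $v_x(0,t)$ only, hence of $\cV$ by Stage 1. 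The regularity needed to invoke Proposition \ref{TmM} and Theorem \ref{evol} --- continuity of $G$, $G_t$, $F$ on $\cD$, existence of $F_x$, and the zero-curvature equation \eqref{zc} --- follows from $v\in C_{\ve}(\cD)$ together with \eqref{1.1}. Theorem \ref{evol} then gives $\vp(t,z)=\big(R_{21}(t,z)+R_{22}(t,z)\vp(0,z)\big)\big(R_{11}(t,z)+R_{12}(t,z)\vp(0,z)\big)^{-1}$, so $\vp(t,z)$ is uniquely determined by $\cV$ for every $t\in[0,a)$.

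\emph{Stage 3 (inverse problem) and the main obstacle.} By Remark \ref{RkInvPr}, the Dirac potential $V(\cdot,t)$ --- equivalently $v(\cdot,t)$ --- is uniquely recovered from $\vp(t,z)$, since $v(\cdot,t)$ is locally square summable for each fixed $t$. Combining the three stages, $v(x,t)$ is uniquely determined on all of $\cD$ by $\cV$. The main obstacle I anticipate is purely in the bookkeeping of regularity: checking that $F(0,t,z)$ is continuous in $t$ so that $R(t,z)$ is well defined and Proposition \ref{TmM} and Theorem \ref{evol} genuinely apply, and that $v(\cdot,t)$ is an admissible Dirac potential for each fixed $t$. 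The conceptual heart is already packaged in Proposition \ref{prequa}: the nontrivial point is that dNLS forces every $t$-derivative at the corner $(0,0)$ to be computable from the initial profile alone, after which quasi-analyticity rigidly pins down the full boundary traces.
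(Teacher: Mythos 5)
Your proposal is correct and takes essentially the same route as the paper, which deduces the theorem precisely by combining Corollary~\ref{CyRec} (your Stage~1), Proposition~\ref{PnW1} and Theorem~\ref{evol} (your Stage~2), and Remark~\ref{RkInvPr} (your Stage~3). You simply spell out the chaining of these ingredients, which the paper leaves as a one-sentence deduction.
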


\begin{Remark}\label{Rec} We see that the scheme to recover $v$ (in the semi-strip $\cD$) from the initial condition
follows from Proposition \ref{PnW1}, Theorem \ref{evol} and the proof of Proposition \ref{prequa}.
The only step that we did not describe in detail is the recovery of the functions $v(0,t)$ and $v_x(0,t)$ from their Taylor coef\/f\/icients
at $t=0$.
Although Taylor coef\/f\/icients uniquely determine quasi-analytic functions $v(0,t)$ and $v_x(0,t)$,
the recovery of these functions presents an interesting problem, which is not solved completely so far.
See \cite{Bang, Khr} and \cite[Section~III.8]{Beur} for some important results.
\end{Remark}

Another important case, where the boundary conditions of the nonlinear Schr\"odinger equation determine a quasi-analytic initial condition, is
discussed in \cite[Section~3]{Sa14b}. We note that
 our solutions are not (in general) quasi-analytic.
\begin{Remark} An interesting class of such solutions of a scalar dNLS that the Weyl functions
$\wt \vp(t,z)$ may be presented as the series $\wt \vp(t,z)=\sum\limits_{k=0}^{\infty}\a_k(t)/z^k$ $($for suf\/f\/iciently large values of
$z)$ was treated in~\cite{SaL91}. $($We note that the Weyl functions $\wt \vp(z)$ from \cite{SaL91}
are Herglotz functions and can be easily mapped into the Weyl functions $\vp(z)$ considered here via a linear-fractional transformation
with constant coef\/f\/icients.$)$ According to~\cite[Theorem~1]{SaL91}, if $\wt \vp(0,z)=\sum\limits_{k=0}^{\infty}\a_k(0)/z^k$, there is one and
only one solution of dNLS from this class in some semi-strip.
\end{Remark}

Important results on the asymptotics of Weyl functions
are given, for instance, in \cite{ClGe, Harr}. However, it would be fruitful to know, under which conditions the asymptotic series (for Weyl functions)
from \cite{ClGe, Harr} converge or at least uniquely def\/ine the corresponding Weyl function.

\section{Nonlinear optics equation on a semi-strip} \label{NW}

In this section we consider the nonlinear optics equation \eqref{NW1}, where $D$ has the form \eqref{NW2}.
We consider equation \eqref{NW1} on the semi-strip $\cD$, which is given by~\eqref{1.4}. First, using Weyl theoretic
results from Section~\ref{subs2.2}, we express Weyl function $\vp(t,z)$ of the auxiliary system~\eqref{NW5},
where $\zeta(x)=\zeta(x,t)=[D,\vr(x,t)]$, in terms of $\vp(0,z)$ and
the boundary condition $\vr(0,t)=\wh \rho(t)$. In other words, we express in these terms the evolution of the Weyl function.
For that purpose, following formulas~\eqref{2.4} and~\eqref{2.5} in Proposition~\ref{TmM},
we introduce matrix function $R(t,z)$ by the relations
\begin{gather} \label{NW20}
R_t(t,z)=(\I z \wh D-[\wh D,\wh \rho(t)])R(t,z), \qquad R(0,z)=I_m.
\end{gather}
Recall that $\wh D$ is a diagonal matrix and that $\wh D>0$.
\begin{Theorem}
\label{TmNWevol}
Let $\vr(x,t)$ satisfy the nonlinear optics equation \eqref{NW1} and the boundary condition $\vr(0,t)=\wh \rho(t)$.
Assume that $\vr$ is uniformly bounded and continuously differentiable on~$\cD$.
Then, the matrix functions
\begin{gather} \label{NW21}
\breve \psi_k(t,z):=
 \begin{bmatrix} I_k & 0\end{bmatrix}
R(t,z) \varphi (0,z)
 \left[
  \begin{matrix}
    0  \\
    I_{m-k}
  \end{matrix}
 \right]
 \biggl(
 \begin{bmatrix} 0 & I_{m-k}\end{bmatrix}
  R(t,z) \varphi (0,z)
  \left[
  \begin{matrix}
    0  \\
    I_{m-k}
  \end{matrix}
  \right]
 \biggr)^{-1}
\end{gather}
are well-defined for $1\leq k <m$, and the evolution of the Weyl function is given by the formula
\begin{gather} \label{NW22}
  \left\{
   \varphi_{i, k+1}(t,z)
  \right\}_{i=1}^k
 = \breve \psi_k (t,z)\begin{bmatrix} 1\\ 0 \\ \ldots \\0
 \end{bmatrix}, \qquad \Im(z)<-M
\end{gather}
and by the normalization conditions~\eqref{NWd7}.
\end{Theorem}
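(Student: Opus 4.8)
The plan is to mimic the dNLS evolution argument (Theorem~\ref{evol}) in the $N$-wave setting, using the factorization \eqref{2.4}--\eqref{2.6} from Proposition~\ref{TmM} together with the Weyl-function construction \eqref{NW13}--\eqref{NW17} from Section~\ref{subs2.2}. First I would verify that Proposition~\ref{TmM} applies: the coefficients $G(x,t,z)=\I zD-[D,\vr]$ and $F(x,t,z)=\I z\wh D-[\wh D,\vr]$ built from $\vr$ via \eqref{NW4} are, under the hypothesis that $\vr$ is continuously differentiable on $\cD$, continuous in $x,t$ with the required $t$- and $x$-derivatives existing, and \eqref{NW1} is exactly the zero-curvature condition \eqref{zc} for this pair. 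Hence \eqref{2.4} holds with $w(x,t,z)$ in place of $u$ and with $R(t,z)$ defined by \eqref{NW20} (which is precisely $R_t=FR$, $R(0,z)=I_m$ restricted to $x=0$, since $F(0,t,z)=\I z\wh D-[\wh D,\wh\rho(t)]$). This gives the key identity $w(x,t,z)=R(x,t,z)\,w(x,0,z)\,R(t,z)^{-1}$, equivalently $w(x,t,z)^{-1}=R(t,z)\,w(x,0,z)^{-1}\,R(x,t,z)^{-1}$.

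Next I would track how this identity transforms the parameter matrices $\cQ_k$. Fix $k$ with $1\le k<m$ and $z$ with $\Im(z)$ sufficiently negative. In the expression \eqref{NW13} for $\psi_k(x,t,z)$ at time $t$, substitute $w(x,t,z)^{-1}=R(t,z)\,w(x,0,z)^{-1}\,R(x,t,z)^{-1}$; absorbing $R(x,t,z)^{-1}$ together with the old parameter $\cQ_k(z)$ into a new parameter, and using that $R(t,z)$ does not depend on $x$, one sees that $\psi_k(x,t,z)$ is obtained from the $t=0$ Möbius set by the constant (in $x$) linear-fractional action of $R(t,z)$ applied to $w(x,0,z)^{-1}\widetilde\cQ_k(z)$ with $\widetilde\cQ_k(z):=R(x,t,z)^{-1}\cQ_k(z)$ — but one must check this new $\widetilde\cQ_k$ is again nonsingular with property-$J_k$, or instead argue directly at the level of the point limits. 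Cleaner is to pass to the limit $x\to\infty$ first: by \eqref{NW15}, $\breve\psi_k(t,z)=\lim_{x\to\infty}\psi_k(x,t,z)$, and by \eqref{NW17} the limit $\breve\psi_k$ is the Möbius transform (with blocks of $\varphi(t,z)$) of $\begin{bmatrix}0\\ I_{m-k}\end{bmatrix}$. Using the inverse relations \eqref{NW16}--\eqref{NW17}, $\varphi(t,z)$ is determined from all the $\breve\psi_k(t,z)$ via \eqref{NW22} and the normalization \eqref{NWd7}; so it suffices to compute $\breve\psi_k(t,z)$ in terms of $\varphi(0,z)$. Feeding $w(x,t,z)^{-1}=R(t,z)w(x,0,z)^{-1}R(x,t,z)^{-1}$ into \eqref{NW13} and comparing the generated Möbius balls at $t$ and at $0$, the factor $R(t,z)$ acts by a fixed $2\times2$-block linear-fractional map (partition $R(t,z)$ along the $k,(m-k)$ split), and the factor $R(x,t,z)^{-1}$, multiplying the parameter $\cQ_k(z)$, ranges over an admissible family as $x\to\infty$ and drops out in the point limit exactly as in the proof of Theorem~\ref{evol} via \eqref{2.3!}. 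This yields precisely formula \eqref{NW21}: $\breve\psi_k(t,z)$ is the block-$(k,m-k)$ Möbius transform of $\begin{bmatrix}0\\ I_{m-k}\end{bmatrix}$ by $R(t,z)\varphi(0,z)$, i.e.\ $\breve\psi_k(t,z)=\big([I_k\ 0]R(t,z)\varphi(0,z)[0;\,I_{m-k}]\big)\big([0\ I_{m-k}]R(t,z)\varphi(0,z)[0;\,I_{m-k}]\big)^{-1}$, which is \eqref{NW21}; and then \eqref{NW22} with \eqref{NWd7} follows from the inverse transform \eqref{NW16}.

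For the well-definedness assertion in \eqref{NW21}, I would argue that the block $[0\ I_{m-k}]R(t,z)\varphi(0,z)[0;\,I_{m-k}]$ is invertible for $\Im(z)<-M$ with $M$ large. This can be seen the same way the invertibility in \eqref{NW13} is seen: $\breve\psi_k(t,z)$ exists as the point limit of the embedded balls $\cN_k(x,t,z)$ at time $t$ (these are well-defined and have a point limit by the $t$-slice version of \eqref{NW15}, since $\vr(\cdot,t)$ is uniformly bounded so $\zeta(\cdot,t)=[D,\vr(\cdot,t)]$ satisfies \eqref{NWd8} with an $M_0$ uniform in $t$ on compact $t$-intervals — here the uniform boundedness of $\vr$ on $\cD$ is used), and the factorization identifies this limit with the right-hand side of \eqref{NW21}, forcing the indicated block to be nonsingular wherever $\breve\psi_k$ is finite, i.e.\ on $\BC^-_M$.

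The main obstacle I anticipate is the bookkeeping around the $x$-dependence of $R(x,t,z)$: unlike $R(t,z)=R(0,t,z)$, the full $R(x,t,z)$ depends on $x$, and one must be careful that when it is absorbed into the $\cQ_k$-parameter the resulting family still sweeps out (asymptotically) the full Möbius ball, so that the point limit \eqref{NW15} is genuinely transported. This is exactly the role played by \eqref{2.3!} in the dNLS proof (\cite{Sa14b}), and the same reasoning — that any admissible sequence $\varphi_b\in\cN_k(b,z)$ converges to $\breve\psi_k$ — carries over, provided one checks that $R(x,t,z)^{-1}\cQ_k(z)$ is admissible for the $t$-slice Dirac-type system at large $x$; this in turn uses the representation \eqref{NW18} for $w(x,0,z)$ and the analogous bound for $R(x,t,z)$ to control growth in $z$. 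The remaining verifications — that the normalization \eqref{NWd7} is preserved under the evolution and that \eqref{NW22} correctly assembles the columns — are routine given \eqref{NW16}--\eqref{NW17}.
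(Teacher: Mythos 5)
Your skeleton matches the paper's: apply Proposition~\ref{TmM} to get $w(x,t,z)^{-1}=R(t,z)w(x,0,z)^{-1}R(x,t,z)^{-1}$, transport the M\"obius parametrization \eqref{NW13} along this factorization, pass to the point limit \eqref{NW15}, and recover $\varphi(t,z)$ from the $\breve\psi_k$ via \eqref{NW16}--\eqref{NW17} and \eqref{NWd7}. But the step you yourself flag as the ``main obstacle'' is exactly where your argument has a genuine gap, and the fix you sketch would not work. To invoke the limit relation \eqref{NW15} for the time-$\wt t$ system you must know that the modified, $x$-dependent parameter (in your one-step version, $R(x,t,z)^{-1}\cQ_k(z)$, equivalently $\cR(x,t,0,z)\cQ_k(z)$ with $\cR(x,t,\wt t,z)=\big(R(x,t,z)R(x,\wt t,z)^{-1}\big)^{-1}$) is again \emph{nonsingular with property-$J_k$}, i.e., satisfies \eqref{NW12}; otherwise the resulting matrix is not a point of $\cN_k(x,\wt t,z)$ and the embedded-ball limit cannot be applied. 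Your suggestion to ``control growth in $z$'' via \eqref{NW18} only yields norm bounds, and norm bounds (even the strict contractivity $\|\cR\|<1$ of \eqref{NW26}) do not imply the indefinite inequality $\wt\cQ_k^*J_k\wt\cQ_k\leq 0$ nor the nonsingularity $\wt\cQ_k^*\wt\cQ_k>0$. For a fixed, non-small $t$ (or for large $|z|$) there is no reason that $R(x,t,z)^{-1}\cQ_k$ has property-$J_k$, so the ``one-step absorption from $0$ to $t$'' fails in general.

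The paper circumvents this by a quantitative small-time-step argument that your proposal is missing: using $R_t=FR$, $F=\I z\wh D-\wh\zeta$ with $\wh D>0$ and $\Im(z)<0$, it first proves $\|\cR(x,t,\wt t,z)\|<1$ and then the estimate \eqref{NW27}--\eqref{NW28}, namely $\|I_m-\cR(x,t,\wt t,z)\|\leq\delta$ \emph{uniformly in $x$} for $0\leq t-\wt t\leq\ve$ and $z$ in a bounded part of $\{\Im(z)<-M\}$. For $\delta$ small this closeness to the identity is what guarantees that $\wt\cQ_k=\cR\,\cQ_k$ still satisfies \eqref{NW12} and also that the denominator in the transported linear-fractional formula \eqref{NW33} is invertible (here \eqref{NW14} is used as well) --- the latter point is assumed rather than proved in your well-definedness paragraph, which is circular because it presupposes the identity \eqref{NW21}. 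The paper then iterates \eqref{NW33} over steps $\wt t=k\ve$ to reach all $t\in[0,a)$, obtaining \eqref{NW34} for $z$ in the bounded region, and finally extends to the whole half-plane $\Im(z)<-M$ by analyticity of both sides; substituting \eqref{NW35} (i.e., \eqref{NW17} at $t=0$) gives \eqref{NW21}. So the missing ingredients in your proposal are precisely the contraction/closeness estimate for $\cR$, the induction over small $t$-steps, and the analytic continuation in $z$; without them the admissibility of the transported parameters --- the heart of the proof --- is not established.
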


\begin{proof} We set
\begin{gather} \label{NW23}
M_0=\sup\| \zeta(x,t)\|, \qquad (x,t)\in \cD, \qquad M>\max_{1\leq k<m}\big(2M_0/(d_k-d_{k+1})\big).
\end{gather}
Recall that $G$ and $F$ for the case of the nonlinear optics equation are given by~\eqref{NW4}. Since $\vr$ is continuously dif\/ferentiable, the conditions of Proposition~\ref{TmM}
are fulf\/illed. Taking into account that the fundamental solution of~\eqref{NW5} is denoted by~$w$
(instead of~$u$ in Proposition~\ref{TmM}), we rewrite~\eqref{2.6} in the form
$w(x,t,z)=R(x,t,z)w(x,0,z)R(t,z)^{-1}$ or, equivalently,
\begin{gather} \label{NW24}
w(x,t,z)^{-1}=R(t,z)w(x,0,z)^{-1}R(x,t,z)^{-1}.
\end{gather}
Using \eqref{NW24}, we express $w(x,t,z)^{-1}$ via $w(x,\wt t,z)^{-1}$, $0 \leq t,\wt t <a$:
\begin{gather}
w(x,t,z)^{-1} =R(t,z)R(\wt t,z)^{-1}R(\wt t,z)w(x,0,z)^{-1}R(x,\wt t,z)^{-1}\big(R(x,t,z)R(x,\wt t,z)^{-1}\big)^{-1}\nonumber\\
 \label{NW25}
 \hphantom{w(x,t,z)^{-1}}{}
 =R(t,z)R(\wt t,z)^{-1}w(x,\wt t,z)^{-1}\big(R(x,t,z)R(x,\wt t,z)^{-1}\big)^{-1}.
\end{gather}
Recall that $R_t=FR$, where $F$ is given in \eqref{NW4} and $\wh D>0$. Hence, putting $\cR(x,t, \wt t, z):=\big(R(x,t,z)R(x,\wt t,z)^{-1}\big)^{-1}$ we derive
\begin{gather*}
\frac{\prt}{\prt t} \cR(x,t, \wt t, z)=-\cR(x,t, \wt t, z)F(x,t,z), \\
\frac{\prt}{\prt t} \big(\cR(x,t, \wt t, z)\cR(x,t, \wt t, z)^*)<0, \qquad \Im(z)<0 , \qquad
 \cR(x,\wt t, \wt t, z)=I_m.
\end{gather*}
From the relations above it is immediate that
\begin{gather} \label{NW26}
\|\cR(x, t, \wt t, z)\|<1 \qquad {\mathrm{for}}\quad t>\wt t, \qquad \Im(z)<0,
\end{gather}
which allows us to estimate the dif\/ference
\begin{gather} \label{NW27}
I_m-\cR(x, t, \wt t, z)=\int_{\wt t}^t \cR(x, s, \wt t, z)F(x,s,z)ds.
\end{gather}
According to \eqref{NW23}, \eqref{NW26} and \eqref{NW27}, for each $\delta >0$ and $c>M$ there is $\ve=\ve(M_0)>0$ such that
\begin{gather} \label{NW28}
\| I_m-\cR(x, t, \wt t, z)\| \leq \delta
\\ \nonumber \text{for all} \quad x\in [0, \infty), \qquad \! 0\leq t-\wt t \leq \ve,\qquad\!
 z \in \{z \colon |z|<c\}\cap \{z \colon \Im(z)<-M\}, \qquad\! c>M .
\end{gather}
Modifying~\eqref{NW13} (so that the functions $\psi_k$ and $w$ depend there also on an additional variable~$t$),
in view of~\eqref{NW25},
we derive
\begin{gather}\nonumber
  \psi_k (x,t,z)
= \begin{bmatrix}I_k &0 \end{bmatrix}
R(t,z)R(\wt t,z)^{-1}w(x,\wt t,z)^{-1}\cR(x,t, \wt t,z)
 \cQ_k (z)
\\
\hphantom{\psi_k (x,t,z)=}{} \times \left(
  \begin{bmatrix}0 &I_{m-k} \end{bmatrix}
 R(t,z)R(\wt t,z)^{-1}w(x,\wt t,z)^{-1}\cR(x,t, \wt t,z)
  \cQ_k(z)
 \right)^{-1}.
 \label{NW30}
\end{gather}
Moreover, putting
\begin{gather} \label{NW31}
\wt \cQ_k(z):=\cR(x,t, \wt t,z)\cQ_k(z), \qquad \cQ_k(z):=\begin{bmatrix} 0 & I_{m-k}\end{bmatrix}
\end{gather}
we see that for suf\/f\/iciently small $\delta$ the matrix function $\wt \cQ_k(z)$ satisf\/ies \eqref{NW12} in the domain (for~$z$)
given in~\eqref{NW28}. Substituting $\wt \cQ_k$ (instead of $ \cQ_k$) into~\eqref{NW13}, we obtain
\begin{gather} \nonumber
\begin{bmatrix}\psi_k(x, \wt t,z) \\ I_{m-k} \end{bmatrix}=w(x,\wt t,z)^{-1}\wt \cQ_k(z)\left(
  \begin{bmatrix}0 &I_{m-k} \end{bmatrix}
 w(x,\wt t,z)^{-1}
  \wt \cQ_k(z)
 \right)^{-1}, \qquad {\mathrm{i.e.,}}
\\ \label{NW32}
w(x,\wt t,z)^{-1}\wt \cQ_k(z)=\begin{bmatrix}\psi_k(x, \wt t,z) \\ I_{m-k} \end{bmatrix}\left(
  \begin{bmatrix}0 &I_{m-k} \end{bmatrix}
 w(x,\wt t,z)^{-1}
  \wt \cQ_k(z)
 \right).
\end{gather}
Using the f\/irst equality in \eqref{NW31}, we can substitute \eqref{NW32} into \eqref{NW30}.
Thus, we derive
\begin{gather*}
\begin{bmatrix} \psi_k(x, t,z) \\ I_{m-k} \end{bmatrix}
=
R(t,z)R(\wt t,z)^{-1}\begin{bmatrix}\psi_k(x, \wt t,z) \\ I_{m-k} \end{bmatrix} \left(
  \begin{bmatrix}0 &I_{m-k} \end{bmatrix}
 R(t,z)R(\wt t,z)^{-1}\begin{bmatrix}\psi_k(x, \wt t,z) \\ I_{m-k} \end{bmatrix} \right)^{-1}
\end{gather*}
and in view of \eqref{NW15}, passing to the limit $x \to \infty$, we have the following formula{\samepage
\begin{gather}\nonumber
\begin{bmatrix}\breve \psi_k( t,z) \\ I_{m-k} \end{bmatrix}
=
R(t,z)R(\wt t,z)^{-1}\begin{bmatrix}\breve \psi_k( \wt t,z) \\ I_{m-k} \end{bmatrix}
\\ \label{NW33}
\hphantom{\begin{bmatrix}\breve \psi_k( t,z) \\ I_{m-k} \end{bmatrix}
= }{}\times
 \left(
  \begin{bmatrix}0 &I_{m-k} \end{bmatrix}
 R(t,z)R(\wt t,z)^{-1}\begin{bmatrix}\breve \psi_k(\wt t,z) \\ I_{m-k} \end{bmatrix} \right)^{-1}.
\end{gather}}

\noindent
Here we used the fact that, according to \eqref{NW14} and \eqref{NW28},
\begin{gather*}
\det \left(
  \begin{bmatrix}0 &I_{m-k} \end{bmatrix}
 R(t,z)R(\wt t,z)^{-1}\begin{bmatrix}\breve \psi_k(\wt t,z) \\ I_{m-k} \end{bmatrix} \right)\not=0
\end{gather*}
for suf\/f\/iciently small $\delta$. Setting $\wt t=k\ve \,\, (k=0,1, \ldots)$, recalling that $R(0,z)=I_m$ and applying each time~\eqref{NW33}, we easily prove (by induction) the equality
\begin{gather} \label{NW34}
\begin{bmatrix}\breve \psi_k( t,z) \\ I_{m-k} \end{bmatrix}
=
R(t,z)\begin{bmatrix}\breve \psi_k(0,z) \\ I_{m-k} \end{bmatrix}
 \left(
  \begin{bmatrix}0 &I_{m-k} \end{bmatrix}
 R(t,z)\begin{bmatrix}\breve \psi_k(0,z) \\ I_{m-k} \end{bmatrix} \right)^{-1}
\end{gather}
for $t$ on all intervals $[0, (k+1) \ve]\cap [0,a)$, that is, for $t$ on $[0,a)$. Although
\eqref{NW34} is proved for $ z \in \{z \colon |z|<c\}\cap \{z \colon \Im(z)<-M\}$, the analyticity
of both sides of \eqref{NW34} implies that the equality holds in the semi-plane $\Im(z)<-M$.
Finally, we note that~\eqref{NW17} at $t=0$ yields
\begin{gather} \label{NW35}
\begin{bmatrix}\breve \psi_k( 0,z) \\ I_{m-k} \end{bmatrix}
=
\vp(0,z)\begin{bmatrix}0 \\ I_{m-k} \end{bmatrix}
 \left(
  \begin{bmatrix}0 &I_{m-k} \end{bmatrix}
 \vp(0,z)\begin{bmatrix} 0 \\ I_{m-k} \end{bmatrix} \right)^{-1}.
\end{gather}
Substituting \eqref{NW35} into~\eqref{NW34}, we obtain~\eqref{NW21}. The procedure to recover
$\vp(t,z)$ from $\{\breve \psi_k(t,z)\}$ (for f\/ixed values of $t$) is described in Section~\ref{subs2.2}
(note that~\eqref{NW22} coincides with~\eqref{NW16}).
\end{proof}

Now, let us prove a uniqueness result for the case of $\wh D$ of the form \eqref{NW3}.
Let initial condition be given by the equality
\begin{gather*} 
\vr(x,0)= \rho(x), \qquad \sup_{x\in [0,\infty)} \|\rho(x)\|<\infty.
\end{gather*}
Denote the Weyl function of system
\begin{gather} \label{NW37}
y_x(x,z)=(\I z D-\zeta(x))y(x,z), \qquad x \geq 0 , \qquad \zeta=[D, \rho]
\end{gather}
by $\vp_0(z)$. (According to Section~\ref{subs2.2}, this Weyl function exists and is unique.)
\begin{Theorem}\label{TmNWnm}
For the case where the entries of the matrix $\wh D$ in \eqref{NW1} are ordered as in \eqref{NW3},
there is no more than one uniformly bounded and continuously
differentiable on $\cD$ solution $\vr=\vr^*$ $($of the nonlinear optics equation~\eqref{NW1}$)$,
having
the initial values~$\vr(x,0)$ such
that
$\vp_0$ is bounded and~\eqref{NW8} holds. That is, there is no more than one
solution of the corresponding initial value problem.
\end{Theorem}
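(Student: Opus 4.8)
The plan rests on the symmetry of equation \eqref{NW1}: interchanging $x\leftrightarrow t$ and $D\leftrightarrow\wh D$ maps \eqref{NW1} to itself, and — this is exactly the point of hypothesis \eqref{NW3} — it turns the $t$‑direction auxiliary system $y_t=(\I z\wh D-[\wh D,\vr])y$ into a system of precisely the type \eqref{NW5} studied in Section~\ref{subs2.2} (with $\wh D$ in the role of $D$ and $t$ in the role of $x$). Hence the Weyl‑theoretic tools of that section — the evolution formula of Theorem~\ref{TmNWevol}, the embedded matrix‑ball construction \eqref{NW15}, and the finite‑interval uniqueness of Corollary~\ref{CyUniq} — become available for the $t$‑system as well. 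Throughout, let $\vr^{(1)},\vr^{(2)}$ be two solutions as in the statement, sharing the initial condition $\vr^{(p)}(x,0)=\rho(x)$.

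First I reduce everything to the boundary. From $\rho$ one forms $\zeta=[D,\rho]$, the fundamental solution $w_0(x,z):=w(x,0,z)$ of \eqref{NW37}, and the Weyl function $\vp_0$ of \eqref{NW37}; these objects are the same for $p=1,2$. The claim is that if, in addition, $\vr^{(1)}(0,t)=\vr^{(2)}(0,t)$ on $[0,a)$, then $\vr^{(1)}\equiv\vr^{(2)}$ on $\cD$. Indeed, agreement of the boundary values makes the matrix $R(t,z)$ of \eqref{NW20} common; Theorem~\ref{TmNWevol} then gives $\vp^{(1)}(t,z)=\vp^{(2)}(t,z)$ for every $t$; for each fixed $t$ this common function is the Weyl (GW‑) function of system \eqref{NW5} with the locally bounded potential $[D,\vr^{(p)}(\cdot,t)]$ and, using that $\vp_0$ is bounded and satisfies \eqref{NW8} together with the form of the linear‑fractional evolution, it satisfies \eqref{NW7}--\eqref{NW8}; so Corollary~\ref{CyUniq}, applied on every interval $(0,l)$, forces $[D,\vr^{(1)}(x,t)]\equiv[D,\vr^{(2)}(x,t)]$ on all of $\cD$, which (as the entries of $D$ are pairwise distinct, and the diagonal of $\vr$ is a passive gauge that drops out of \eqref{NW1}) determines $\vr$. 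Thus it remains to prove that $\rho$ alone determines $\vr(0,t)$ on $[0,a)$.

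For this boundary‑recovery step I again use the symmetry. The boundary values enter only through the potential $[\wh D,\vr^{(p)}(0,t)]$ of the $t$‑system at $x=0$; by Corollary~\ref{CyUniq} — applicable here because \eqref{NW3} puts $\wh D$ in the form required in \eqref{NW2} — that potential is uniquely determined on each $(0,l)$, $l<a$, by any single analytic bounded matrix function $\wh\vp$ obeying the analogues of \eqref{NW7}--\eqref{NW8}. So it suffices to exhibit one such $\wh\vp$ fabricated from $\rho$ (hence common to the two solutions). To this end one transposes the argument in the proof of Theorem~\ref{TmNWevol}: rearranging the factorization \eqref{2.6} gives $R(x,t,z)=w(x,t,z)R(t,z)w_0(x,z)^{-1}$, from which one reads off that, for each fixed $x$, the matrix $w_0(x,z)^{-1}$ carries GW‑functions of the $t$‑system at position $x$ to GW‑functions of the $t$‑system at $x=0$ — here one uses that, $\vr^{(p)}$ being uniformly bounded, $w^{(p)}(x,t,z)$ and its inverse are bounded in $t$ on $[0,l]$ — and that, as $x$ grows, the images form an embedded family of matrix balls whose left semi‑radii tend to zero, by the estimates \eqref{NW23}, \eqref{NW26}--\eqref{NW28} that drive the proof of Theorem~\ref{TmNWevol}. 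The unique common point of this family is a GW‑function of the $t$‑system at $x=0$ depending only on $w_0$ (equivalently on $\rho$) and $\vp_0$; feeding it into Corollary~\ref{CyUniq} yields $[\wh D,\vr^{(1)}(0,t)]\equiv[\wh D,\vr^{(2)}(0,t)]$ on $[0,a)$, the desired coincidence of boundary values, completing the reduction.

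The main obstacle is precisely this last step. The delicate issue is that the $t$‑variable ranges over the \emph{finite} interval $[0,a)$, so the half‑line Weyl theory of Section~\ref{subs2.2} does not transpose verbatim (a ``Weyl function'' of the $t$‑system need not be unique); one must argue solely with GW‑functions and the finite‑interval uniqueness of Corollary~\ref{CyUniq}, and verify carefully that the embedded balls of candidate GW‑functions at $x=0$ really shrink to a point, that this point satisfies the analogues of \eqref{NW7}--\eqref{NW8} (where both the uniform bound on $\vr^{(p)}$ and the hypothesis on $\vp_0$ are needed), and that it is manufactured from $\rho$ alone. The remaining bookkeeping — that the evolved Weyl functions $\vp^{(p)}(t,z)$ stay in the class to which Corollary~\ref{CyUniq} applies, and the harmless diagonal ambiguity in passing between $\vr$ and $[D,\vr]$, $[\wh D,\vr]$ — is routine.
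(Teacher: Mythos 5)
Your overall architecture does match the paper's: reduce uniqueness to (i) recovering the boundary value $\vr(0,t)$ from the initial data alone and (ii) evolving the Weyl function and invoking uniqueness of the inverse spectral problem in $x$ for each fixed $t$. But step (i) --- which you yourself flag as ``the main obstacle'' --- is exactly where your argument has a genuine gap, and you do not close it. You propose to manufacture a GW-function of the boundary system \eqref{NW20} as the common point of a family of matrix balls obtained by transporting unspecified GW-functions of the $t$-systems at position $x$ back to $x=0$ via $w(x,0,z)^{-1}$. You never verify that these images form embedded matrix balls, that the left semi-radii shrink to zero, or --- crucially --- that the limit depends only on $\rho$ and $\vp_0$ rather than on the particular solution $\vr$; the matrix-ball machinery of Section~\ref{subs2.2} is tied to the property-$J_k$ parameter classes $\cQ_k$ and the specific M\"obius transformations \eqref{NW13}, not to arbitrary GW-functions, so none of these claims is automatic. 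The paper closes this step by a concrete computation that your proposal lacks: it proves that $\vp_0$ \emph{itself} is a GW-function of system \eqref{NW20}, i.e., that
\[
\sup_{t\in[0,a),\,\Im(z)<-M}\big\|R(t,z)\,\vp_0(z)\exp\{-\I z t\wh D\}\big\|<\infty,
\]
by combining \eqref{NW14} and \eqref{NW34} with the monotonicity estimate \eqref{NW41} for a quadratic form built from $R(t,z)$, $I_m-J_k$ and the weight $\exp\{\I(\ov z-z)\wh d_{k+1}t-4\wh M t\}$ (formulas \eqref{NW39}--\eqref{NW42}); this is precisely where the ordering hypothesis \eqref{NW3} enters. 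Once \eqref{NW38!} is known, Corollary~\ref{CyUniq} applied to \eqref{NW20} recovers $[\wh D,\wh\rho(t)]$, hence $R(t,z)$, from $\vp_0$ alone, with no limiting construction needed.

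A second, smaller gap: you dismiss as routine the verification that the evolved function $\vp(t,z)$ lies in the class to which Theorem~\ref{NWTmUniq} and Corollary~\ref{CyUniq} apply. This is not free. The paper needs the triangular integral representation \eqref{NW43} of $R(t,z)$ together with the bound \eqref{NW38!} and the Paley--Wiener theorems to obtain the representation \eqref{NW44} of $R(t,z)\vp_0(z)\E^{-\I z t\wh D}$, from which boundedness and \eqref{NW8} for $\vp(t,z)$ follow via \eqref{NW45}. Since your proposal never establishes \eqref{NW38!}, this final verification cannot even begin in your setup.
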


\begin{proof}
Let $\vr(x,t)$ satisfy conditions of the theorem. We f\/ix $M$ such~\eqref{NW23} is valid, $\vp_0(z)$ is bounded for $\Im(z)\leq -M$ and \eqref{NW8} holds for $\vp_0(\xi+\I \eta)$ when $\eta \leq -M$. We
set also
\begin{gather} \label{NW38}
\wh M=\sup_{0\leq t<a}\| \wh \zeta(0,t)\|, \qquad \wh \zeta(0,t)=[\wh D, \wh \rho(t)],
\end{gather}
where $\wh \rho(t)=\vr(0,t)$. First, we show that the inequality
\begin{gather} \label{NW38!}
\sup_{t\in [0,a), \, \Im(z)<-M}\|R(t,z)\vp_0(z)\exp\{-\I z t \wh D\}\|<\infty, \qquad a<\infty
\end{gather}
is valid. Indeed, according to \eqref{NW14} and \eqref{NW34} we have
\begin{gather} \label{NW39}
\begin{bmatrix}\breve \psi_k( 0,z)^* & I_{m-k} \end{bmatrix} R(t,z)^* J_k
R(t,z)\begin{bmatrix}\breve \psi_k(0,z) \\ I_{m-k} \end{bmatrix}
\leq 0.
\end{gather}
Clearly, \eqref{NW39} yields the inequality
\begin{gather} \nonumber
 - 4 \wh M \begin{bmatrix}\breve \psi_k( 0,z)^* & I_{m-k} \end{bmatrix} R(t,z)^* (I_m-J_k)
R(t,z)\begin{bmatrix}\breve \psi_k(0,z) \\ I_{m-k} \end{bmatrix}
\\ \label{NW40} \qquad{}
\leq - 4 \wh M \begin{bmatrix}\breve \psi_k( 0,z)^* & I_{m-k} \end{bmatrix} R(t,z)^*
R(t,z)\begin{bmatrix}\breve \psi_k(0,z) \\ I_{m-k} \end{bmatrix} .
\end{gather}
Taking into account that $R_t=FR$ and relations \eqref{NW3}, \eqref{NW4}, \eqref{NW38} and \eqref{NW40} hold, we derive
\begin{gather}
\frac{d}{d t}\bigg(\exp\{\I(\ov z -z)\wh d_{k+1} t - 4 \wh M t\}
\begin{bmatrix} \breve \psi_k( 0,z)^* & I_{m-k} \end{bmatrix}\nonumber\\
\qquad{}\times R(t,z)^* (I_m-J_k )
R(t,z)\begin{bmatrix}\breve \psi_k(0,z) \\ I_{m-k} \end{bmatrix} \bigg)
\leq 0\label{NW41}
\end{gather}
for $\Im(z)<0$. Formulas \eqref{NW39} and \eqref{NW41} imply that
\begin{gather} \label{NW42}
\exp\big\{\I(\ov z -z)\wh d_{k+1} t - 4 \wh M t\big\}
\begin{bmatrix} \breve \psi_k( 0,z)^* & I_{m-k} \end{bmatrix} R(t,z)^*
R(t,z)\begin{bmatrix}\breve \psi_k(0,z) \\ I_{m-k} \end{bmatrix}
\\ \qquad{}
\leq
\begin{bmatrix} \breve \psi_k( 0,z)^* & I_{m-k} \end{bmatrix} (I_m-J_k )
\begin{bmatrix}\breve \psi_k(0,z) \\ I_{m-k} \end{bmatrix}
=2 I_{m-k}, \qquad 1\leq k<m, \quad \Im(z)<-M.\nonumber
\end{gather}
Recall that $\vp_0$ is given by \eqref{NW16}. Hence, \eqref{NW38!} follows from~\eqref{NW42}.

Consider system \eqref{NW20}. Since $\vp_0$ is bounded and satisf\/ies \eqref{NW8} and \eqref{NW38!}, according to Corollary~\ref{CyUniq},
the matrix function
$\wh \zeta(0,t)=[\wh D,\wh \rho(t)]$ (and so $R$) is uniquely def\/ined by $\vp_0$. Thus, from Theorem~\ref{TmNWevol}, we see that
$\vp(t,z)$ is uniquely def\/ined by~$\vp_0$.

In order to prove our theorem, it remains to show that $\vp(t,z)$ satisf\/ies conditions of Theo\-rem~\ref{NWTmUniq}
for each $t$. Indeed, in view of~\eqref{NW20}, we can rewrite for~$R$ the representation~\eqref{NW18}:
\begin{gather}
  R(t,z) = \exp \{ \I z t \wh D\}
    + \int_{\wh d_m t}^{\wh d_1 t}
           \exp \{\I zs\}
     \wh N(t,s)
    \, ds, \qquad \sup_{t<a} \| \wh N(t,s) \| < \infty.
 \label{NW43}
\end{gather}
By virtue of \eqref{NW38!}, the matrix function $\, \,R(t,z)\, \vp_0(z) \, \E^{-\I z t \wh D} -I_m \,\,$ is bounded in the domain
$\Im(z)\leq -M$.
Since $R(t,z)$ satisf\/ies~\eqref{NW43} and $M$ is chosen so that $\vp_0$ satisf\/ies \eqref{NW8} for $z=\xi -\I M$,
we see that $R(t,\xi-\I M)\vp_0(\xi-\I M)\E^{-\I (\xi-\I M)t \wh D} -I_m \in L^2_{m\times m}(-\infty, \infty)$
for each $0\leq t <a$,
where $L^2_{m\times m}(0,\infty)$ is the class of $m\times m$ matrix functions, the entries of which belong to $L^2(0,\infty)$.
 Hence, the well-known Theorems~V and~VIII (Sections~4 and~5 in~\cite{WP}, respectively) on the Fourier transform in complex domains
yield the
representation (see \cite[formula~(E11)]{SaSaR}):
\begin{gather}
   \label{NW44}
R(t,z)\vp_0(z)\E^{-\I z t \wh D}=I_m+\int_0^{\infty}\E^{-\I zx}\cF(x)dx, q\quad \E^{-xM}\cF(x)\in L^2_{m\times m}(0,\infty) .
\end{gather}
It is immediate also that formula \eqref{NW21} can be modif\/ied slightly:
\begin{gather}
\breve \psi_k(t,z)=
 \begin{bmatrix} I_k & 0\end{bmatrix}
R(t,z) \varphi_0 (z)\E^{-\I z t \wh D}
 \left[
  \begin{matrix}
    0  \\
    I_{m-k}
  \end{matrix}
 \right]\nonumber\\
 \hphantom{\breve \psi_k(t,z)=}{}\times
 \left(
 \begin{bmatrix} 0 & I_{m-k}\end{bmatrix}
  R(t,z) \varphi_0 (z)\E^{-\I z t \wh D}
  \left[
  \begin{matrix}
    0  \\
    I_{m-k}
  \end{matrix}
  \right]
 \right)^{-1} .\label{NW45}
\end{gather}
According to \eqref{NW44}, the normalized GW-function $\vp(t,z)$ constructed via equalities~\eqref{NW22}
and \eqref{NW45} satisf\/ies conditions of Theorem~\ref{NWTmUniq}. In other words, there is no more then
one solution of ISpP for $\vp(t,z)$, that is, $\vr(x,t)$ is unique.
\end{proof}
\begin{Remark}\label{RkDop}
In the case of system \eqref{NW20}, by virtue of
 \eqref{NW38} and \eqref{NW38!}, the requirements of Corollary \ref{CyUniq} are fulf\/illed for~$\vp_0$,
 and so $\vp_0$ uniquely determines the boundary condition~$\wh \rho$.
 Moreover, if $\vp_0$ satisf\/ies~\eqref{NW9} and \eqref{NW10} there is a rigorous procedure to recover
 $\wh \rho $ from $\vp_0$ $($see Remark~\ref{NWInvPr}$)$.
\end{Remark}

Although Theorem \ref{TmNWnm} was announced in \cite{SaARMS}, its proof is published for the f\/irst time.
It is essential to know, for which initial conditions $\rho(x)$, the restrictions on $\vp_0$ (from Theorem~\ref{TmNWnm})
are fulf\/illed. First, let us formulate a particular case of Theorem~6.1 from~\cite{BC0}.
\begin{Proposition} \label{NWPnBC} Suppose that the $m \times m$ matrix function $\rho(x)$ is absolutely continuous on $\BR$
and $\rho(x), \rho^{\prime}(x)\in
L^1_{m\times m}(-\infty, \ \infty)$. Then, for some $M>0$, there is an
analytic with respect to~$z$ fundamental $($unnormalized$)$ solution~$\cM(x,z)$
of the equation
\begin{gather}
 \cM_x=\I z [D, \cM]-\zeta \cM, \qquad x \in \BR, \qquad \zeta= [D, \rho] ,
 \label{NW46}
\end{gather}
such that uniformly with respect to $x$ we have
\begin{gather}
 \label{NW47}
\cM(x,z)=I_m+\frac{1}{z}\cM_1(x)+o\big(|z|^{-1}\big), \qquad |z| \to \infty, \qquad \Im z<-M,
\end{gather}
where $\cM_1(x)$ is absolutely continuous.
\end{Proposition}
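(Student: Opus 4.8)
The plan is to construct $\cM$ as a Jost-type (normalized-at-infinity) solution: recast \eqref{NW46} as an integral equation whose contours of integration are dictated by the ordering $d_1>\cdots>d_m$ and the sign of $\Im z$, solve it by a small-norm argument throughout the half-plane $\Im z<-M$ ($M$ large), and then extract \eqref{NW47} by one integration by parts. Three structural facts about $\zeta=[D,\rho]$ will be used repeatedly: $\zeta$ has zero diagonal; $\zeta$ is skew-Hermitian ($\zeta=-\zeta^*$, since $\rho=\rho^*$); and, because $\rho$ is absolutely continuous with $\rho'\in L^1$, the function $\rho$ is bounded with $\rho(\pm\infty)=0$, so that $\zeta$ is simultaneously in $L^1$ and $L^\infty$, is absolutely continuous, vanishes at $\pm\infty$, and has $\zeta'=[D,\rho']\in L^1$.

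Write $\cM=\cM^{\mathrm d}+\cM^{\mathrm{od}}$ for the splitting into diagonal and off-diagonal parts. Equation \eqref{NW46} becomes $\cM^{\mathrm d}_x=-(\zeta\cM^{\mathrm{od}})^{\mathrm d}$ together with $(\cM_{jk})_x=\I z(d_j-d_k)\cM_{jk}-(\zeta\cM)_{jk}$ for $j\neq k$. I integrate $\cM^{\mathrm d}$, and the entries $\cM_{jk}$ with $d_j<d_k$, from $-\infty$, and the entries $\cM_{jk}$ with $d_j>d_k$ from $+\infty$. With these choices every kernel that appears is $e^{\I z(d_j-d_k)(x-y)}$ restricted to the half-line on which it is bounded, indeed $|e^{\I z(d_j-d_k)(x-y)}|\le e^{-|\Im z|\,\delta\,|x-y|}$, where $\delta:=\min_{j\neq k}|d_j-d_k|>0$. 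This yields an integral equation $\cM=I+T_z\cM$ on the Banach space of bounded continuous $m\times m$ matrix functions on $\BR$.

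Now I solve it. In the block form induced by the $\mathrm d$/$\mathrm{od}$ splitting, $T_z=\begin{pmatrix}0&P_z\\ Q_z&R_z\end{pmatrix}$; the block $P_z$ carries no exponential, so $\|P_z\|\le c\,\|\zeta\|_{L^1}$, while $Q_z$ and $R_z$ carry the exponential kernels, so using $\zeta\in L^\infty$ and $\sup_x\int e^{-|\Im z|\delta|x-y|}\,dy\le 2/(|\Im z|\delta)$ one gets $\|Q_z\|,\|R_z\|\le c'\,\|\zeta\|_{L^\infty}/|\Im z|$. Since the $(1,1)$-block of $T_z$ vanishes, $\|T_z^2\|\le c''/|\Im z|$, which is $<1$ as soon as $\Im z<-M$ with $M$ fixed large. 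Hence $I-T_z^2$ is invertible by a Neumann series and, from $\cM=I+T_zI+T_z^2\cM$, the unique bounded solution is $\cM=(I-T_z^2)^{-1}(I+T_zI)$; it obeys $\|\cM-I\|\le(\|T_zI\|+\|T_z^2\|)/(1-\|T_z^2\|)\le \mathrm{const}/M<1$ uniformly on $\{\Im z<-M\}$, so $\cM(x,z)$ is an invertible matrix for every $x$ and every such $z$. Analyticity of $\cM(x,\cdot)$ on $\{\Im z<-M\}$ follows because the kernels are entire in $z$ and absolutely integrable uniformly on compact subsets, so the Neumann series converges locally uniformly; and $\widetilde\Psi(x,z):=\cM(x,z)e^{\I zxD}$ then solves the linear system \eqref{NW37}, i.e.\ $\cM$ is the required analytic, invertible solution of \eqref{NW46}.

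Finally, \eqref{NW47}. Iterating once, $\cM=I+T_zI+T_z^2\cM$. Since $\zeta$ has zero diagonal, $(T_zI)^{\mathrm d}\equiv 0$, while for $j\neq k$ one has $(T_zI)_{jk}(x)=\pm\int e^{\I z(d_j-d_k)(x-y)}\zeta_{jk}(y)\,dy$; integrating by parts, using $\zeta$ absolutely continuous, $\zeta'\in L^1$ and $\zeta(\pm\infty)=0$, gives $(T_zI)_{jk}(x)=\dfrac{\zeta_{jk}(x)}{\I z(d_j-d_k)}+\dfrac{E_{jk}(x,z)}{\I z(d_j-d_k)}$, with $E_{jk}(x,z)=\pm\int e^{\I z(d_j-d_k)(x-y)}\zeta'_{jk}(y)\,dy$ bounded by $\|\zeta'\|_{L^1}$ and tending to $0$ as $|z|\to\infty$. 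A parallel computation shows $(T_z^2\cM)^{\mathrm{od}}=O(|z|^{-2})$ while $(T_z^2\cM)^{\mathrm d}=-\int(\zeta\,(T_z\cM)^{\mathrm{od}})^{\mathrm d}$ is $O(|z|^{-1})$ with a $|z|^{-1}$-coefficient that is an integral of products of $\zeta$'s (hence an $L^1$ integrand). Collecting the $|z|^{-1}$-terms identifies $\cM_1$: its off-diagonal entries are $\zeta_{jk}(x)/(\I(d_j-d_k))$ and its diagonal entries an absolutely continuous primitive, so $\cM_1$ is absolutely continuous. I expect the main obstacle to be the \emph{uniformity in $x$} of the remainder: the naive estimate of $E_{jk}$ gives $o(|z|^{-1})$ only as $\Im z\to-\infty$, and upgrading it to all of $\{\Im z<-M\}$, uniformly in $x\in\BR$, requires the finer oscillatory-integral bounds of \cite{BC0} (using $\sup_x\int_x^{x+h}|\zeta'|\to0$ as $h\to0$ together with a uniform Riemann--Lebesgue estimate). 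It is precisely here that the hypotheses $\rho\in L^1$ and $\rho'\in L^1$ are indispensable --- the former to make the contours meaningful and to give $\zeta(\pm\infty)=0$, the latter to furnish the $|z|^{-1}$ gain and the absolute continuity of $\cM_1$.
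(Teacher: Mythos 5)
The paper does not prove this proposition at all: it is quoted as a particular case of Theorem~6.1 of Beals--Coifman \cite{BC0}, with only the side remark that the absolute continuity of $\cM_1$ can be read off from formulas (6.6) and (6.8) of that proof. So the relevant comparison is with the cited source, and your argument is in essence a reconstruction of it: the Jost-type solution built from a mixed Volterra/integral equation whose directions of integration are dictated by the signs of $d_j-d_k$ and $\Im z$, invertibility of $I-T_z^2$ (exploiting that $\zeta$ has zero diagonal, so the diagonal-to-diagonal block of $T_z$ vanishes), and one integration by parts using $\zeta'=[D,\rho']\in L^1$ and $\zeta(\pm\infty)=0$ to produce the $1/z$ term. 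The structural facts you extract from the hypotheses ($\zeta\in L^1\cap L^\infty$, absolutely continuous, vanishing at infinity, $\zeta'\in L^1$) are exactly what is needed, and the identification of $\cM_1$ --- off-diagonal entries $\zeta_{jk}(x)/(\I(d_j-d_k))$ up to sign, diagonal entries a primitive of an $L^1$ function --- correctly yields its absolute continuity.

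The one place where the sketch under-delivers is the remainder estimate, and you have honestly flagged it. Your operator bounds give $\|T_zI\|,\|T_z^2\|=O(|\Im z|^{-1})$, which is useless along horizontal lines $\Im z=-M'$, where $|z|\to\infty$ but $|\Im z|$ stays bounded; in particular the claim $(T_z^2\cM)^{\mathrm{od}}=O(|z|^{-2})$ does not follow from these bounds alone, only $O(|\Im z|^{-2})$ does. Both for $E_{jk}$ and for the $T_z^2\cM$ term, the uniform $o(|z|^{-1})$ statement on all of $\{\Im z<-M\}$ requires a uniform Riemann--Lebesgue argument (e.g., approximating $\zeta$ and $\zeta'$ in $L^1$ by smooth compactly supported functions and integrating by parts once more for the smooth part, which gives a gain of $|z|^{-1}$ with constants independent of $x$). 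This is precisely the technical core of \cite[Theorem~6.1]{BC0}, to which you defer; with that ingredient supplied, the argument is complete and matches the proof the paper relies on.
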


We note that the fact that $\cM_1$ is absolutely continuous is immediate from the proof of
\cite[Theorem~6.1]{BC0} (more precisely, from formulas (6.6) and (6.8)). Now, since we can always
extend $\rho^{\prime}$ on $\BR$, we consider $\rho(x)$ on $[0,\infty)$ only and assume that $\rho$ is absolutely continuous
and $\rho, \, \rho^{\prime}\in
L^1_{m\times m}(\BR_+)$. Without loss of generality, we assume that $M$ is chosen so that $\cM(0,z)$ is invertible for
$\Im(z)<-M$. Then, according to \eqref{NW46}, the matrix function
\begin{align}&
w(x,z)=\cM(x,z)\E^{\I z x D}\cM(0,z)^{-1}
 \label{NW48}
\end{align}
is the normalized (by $w(0,z)=I_m$) fundamental solution of the equation \eqref{NW37} (and we don't require so far
that $\rho=\rho^*$).
Moreover, \eqref{NW47} implies that~\eqref{NW7} holds for $\vp(z)=\cM(0,z)$. That is, assuming $\rho=\rho^*$
and taking into account Def\/inition~\ref{NWDn2}, we see that
$\cM(0,z)$ is a GW-function of~\eqref{NW37}.

Recall that in Theorem~\ref{TmNWnm} we speak about the Weyl function $\vp_0$ or, equivalently for a~boun\-ded function $\rho=\rho^*$,
about the normalized GW-function. Thus, we should normali\-ze~$\cM(0,z)$. For that purpose we construct a lower triangular
matrix function $\wh \cM(z)$ via the right lower $k\times k$ blocks $\cP_k(z)$ of $\cM(0,z)$. Namely, we construct
$\wh \cM(z)$ columnwise via the equalities
\begin{gather}
\wh \cM(z)\{\delta_{i,m-k+1}\}_{i=1}^m:=\begin{bmatrix}
0 \\ \cP_k(z)^{-1}\{\delta_{i1}\}_{i=1}^k
\end{bmatrix}, \qquad 1\leq k \leq m ,
 \label{NW49}
\end{gather}
where $\{\delta_{i,m-k+1}\}_{i=1}^m$ and $\{\delta_{i1}\}_{i=1}^k$ are column vectors.
It follows from~\eqref{NW49} that the normalization conditions~\eqref{NWd7} hold for
\begin{gather}
\vp_0(z)=\cM(0,z)\wh \cM(z).
 \label{NW50}
\end{gather}
Since $\cM(z)$ is lower triangular and $D$ satisf\/ies \eqref{NW2}, we see that $\E^{\I z x D}\wh \cM(z) \E^{-\I z x D}$ is bounded
for $\Im(z)<-M$. Hence, taking into account that $\cM(0,z)$ is a GW-function, we derive
that \eqref{NW7} is also valid for $\vp(z)=\cM(0,z)\wh \cM(z)$ (i.e., $\vp_0$ given by~\eqref{NW50} is the normalized GW-function).
Finally, relations \eqref{NW47}, \eqref{NW49} and \eqref{NW50} show that $\vp_0$ is bounded and that~\eqref{NW8} also holds
for $\vp_0$. Thus, we proved the statement below.
\begin{Proposition} \label{NWPnNorm} Suppose that the initial condition $\rho(x)=\rho(x)^*$ is absolutely continuous on $[0, \infty)$
and $\rho(x), \rho^{\prime}(x)\in
L^1_{m\times m}(\BR_+)$. Then, the Weyl function $\vp_0(z)$ of the system~\eqref{NW37}, where $\zeta=\zeta_0=[D,\rho]$, exists. Moreover, $\vp_0(z)$ is
analytic and bounded $($in some semi-plane $\Im(z)<-M$, $M>0)$, and it satisfies~\eqref{NW8}.
\end{Proposition}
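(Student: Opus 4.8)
The plan is to assemble \textbf{Proposition~\ref{NWPnNorm}} from the ingredients already developed in the preceding discussion, using \textbf{Proposition~\ref{NWPnBC}} (the particular case of Theorem~6.1 from~\cite{BC0}) as the analytic engine. First I would apply Proposition~\ref{NWPnBC}: since $\rho=\rho^*$ is absolutely continuous on $[0,\infty)$ with $\rho,\rho'\in L^1_{m\times m}(\BR_+)$, I extend $\rho'$ (and hence $\rho$) to all of $\BR$ keeping the $L^1$ hypothesis, obtain the unnormalized fundamental solution $\cM(x,z)$ of~\eqref{NW46} that is analytic in $z$ for $\Im z<-M$, and record the asymptotics~\eqref{NW47} with $\cM_1$ absolutely continuous. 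Shrinking $M$ if necessary, I may assume $\cM(0,z)$ is invertible in $\Im(z)<-M$; then $w(x,z)=\cM(x,z)\E^{\I zxD}\cM(0,z)^{-1}$ from~\eqref{NW48} is the normalized fundamental solution of~\eqref{NW37}, and~\eqref{NW47} gives that~\eqref{NW7} holds with $\vp(z)=\cM(0,z)$, so $\cM(0,z)$ is a GW-function of system~\eqref{NW37}.

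Next I would carry out the normalization step. Since we want the Weyl function $\vp_0$ (equivalently, the normalized GW-function, using uniform boundedness of $\zeta_0=[D,\rho]$, which follows from $\rho\in L^1$ being bounded after the absolute continuity), I construct the lower-triangular correction $\wh\cM(z)$ columnwise from the lower-right $k\times k$ blocks $\cP_k(z)$ of $\cM(0,z)$ via~\eqref{NW49}, so that $\vp_0(z)=\cM(0,z)\wh\cM(z)$ as in~\eqref{NW50} satisfies the normalization conditions~\eqref{NWd7}. The key observation is that $\cM(0,z)$ is lower triangular and $D$ obeys~\eqref{NW2} with $d_1>\cdots>d_m>0$, so $\E^{\I zxD}\wh\cM(z)\E^{-\I zxD}$ stays bounded for $\Im(z)<-M$ (the off-diagonal entries of $\wh\cM$ below the diagonal are multiplied by factors $\E^{\I z(d_j-d_i)x}$ with $d_j-d_i<0$ for $i<j$, which are bounded when $\Im z<0$); combining this with the fact that $\cM(0,z)$ already satisfies~\eqref{NW7}, I conclude~\eqref{NW7} also holds for $\vp(z)=\cM(0,z)\wh\cM(z)$, i.e., $\vp_0$ is genuinely the normalized GW-function, which for uniformly bounded $\rho$ coincides with the Weyl function of~\eqref{NW37}.

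Finally, I would read off boundedness and the integral bound~\eqref{NW8} directly from the structure of $\vp_0$. From~\eqref{NW47}, $\cM(0,z)=I_m+z^{-1}\cM_1(0)+o(|z|^{-1})$ uniformly as $|z|\to\infty$ in $\Im z<-M$; the construction~\eqref{NW49} of $\wh\cM(z)$ from the blocks $\cP_k(z)$ of $\cM(0,z)$ then forces $\wh\cM(z)=I_m+O(|z|^{-1})$ as well (inverting $k\times k$ blocks that tend to $I_k$ preserves this), so $\vp_0(z)-I_m=O(|z|^{-1})$, which gives both boundedness of $\vp_0$ in $\Im(z)<-M$ and, since $O(|z|^{-1})$ is square-integrable along each horizontal line $z=\xi+\I\eta$ with $\eta<-M$, the finiteness of $\int_{-\infty}^\infty(\vp_0(z)-I_m)^*(\vp_0(z)-I_m)\,d\xi$ required in~\eqref{NW8}. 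The main obstacle I anticipate is not any single estimate but making the triangular-normalization bookkeeping fully precise: one must check that inverting the blocks $\cP_k(z)$ is legitimate (they are invertible for $\Im z<-M$ after possibly enlarging $M$, being perturbations of $I_k$ by~\eqref{NW47}) and that the off-diagonal growth killed by $D$'s strict ordering is exactly matched by the exponential factors — this is where the hypothesis $d_1>\cdots>d_m$ and the sign $\Im z<0$ are essential and where a careless sign would break the argument.
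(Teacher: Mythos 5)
Your proposal follows the paper's own argument essentially verbatim: extend $\rho$ and apply Proposition~\ref{NWPnBC}, identify $\cM(0,z)$ as a GW-function via \eqref{NW48} and \eqref{NW47}, normalize with the lower-triangular $\wh\cM$ of \eqref{NW49}--\eqref{NW50} using boundedness of $\E^{\I z x D}\wh\cM(z)\E^{-\I z x D}$ for $\Im(z)<0$ (which is exactly where the ordering \eqref{NW2} enters), and read off boundedness of $\vp_0$ and \eqref{NW8} from the $I_m+O(|z|^{-1})$ asymptotics. The only slip is the phrase ``shrinking $M$'' early on (you mean enlarging $M$, i.e., shrinking the half-plane), which you yourself correct later when discussing invertibility of the blocks $\cP_k(z)$.
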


An existence result for a solution of an initial value problem (for the nonlinear optics equation)
is given in \cite[Remark~4.7]{SaA17}.

\subsection*{Acknowledgements}

This research was
supported by the Austrian Science Fund (FWF) under Grant No.~P24301.
The author is grateful to A.~Rainer for a helpful discussion on
quasi-analytic functions.

\pdfbookmark[1]{References}{ref}
\LastPageEnding

\end{document}